\newtheorem{theorem}{Theorem}[section]
\newtheorem{corollary}[theorem]{Corollary}
\theoremstyle{definition}
\newtheorem{definition}[theorem]{Definition}
\newtheorem{remark}[theorem]{Remark}
\newtheorem{example}[theorem]{Example}
\numberwithin{equation}{section}
\begin{document}
\title[Biharmonic and Interpolating Sesqui-Harmonic Vector Fields\ldots]{Biharmonic and Interpolating Sesqui-Harmonic Vector Fields with Respect to the $\varphi$-Sasakian Metric}
	
\author[A. Zagane]{Abderrahim Zagane}
\address{Ahmed Zabana University-Relizane, Faculty of Science and Technology, Department of Mathematics, 48000, Relizane, Algeria}
\email{Zaganeabr2018@gmail.com}
%\textbf{ORCID ID:0000-0001-9339-3787}

\author[K. Biroud]{Kheireddine Biroud}
\address{Kheireddine Biroud \\ Higher School of Management-Tlemcen \\  13000,Tlemcen, Algeria}
\email{kh\_biroud@yahoo.fr}

\author[M. Djilali]{Medjahed Djilali}
\address{Ahmed Zabana University-Relizane, Faculty of Science and Technology,
	Department of Mathematics, 48000, Relizane, Algeria}
\email{medjahed.djilali@univ-relizane.dz}
%\textbf{ORCID ID:0000-0002-4390-9404}
		
\begin{abstract}

This work investigates biharmonic and interpolating sesqui-harmonic vector fields on the tangent bundle of a para-K\"{a}hler--Norden manifold $(M^{2m}, \varphi, g)$  endowed with the $\varphi$-Sasaki metric. We derive the first variation of the bienergy and interpolating sesqui-energy functionals, restricted to the space of vector fields. Explicit characterizations are established for vector fields satisfying the corresponding variational conditions—namely, biharmonicity and interpolating sesqui-harmonicity. Furthermore, several examples are presented to illustrate the general theory and to elucidate the distinctions between harmonic, biharmonic, and interpolating sesqui-harmonic behaviors. These results extend and complement existing research on higher-order harmonicity in pseudo-Riemannian geometry.

\textbf{2010 Mathematics subject classifications:} Primary: 53C43, 58E20 ; Secondary: 53C15, 53C55.
		
\textbf{Keywords:} Para-K\"{a}hler--Norden manifold, tangent bundle, $\varphi$-Sasaki metric, harmonic maps, biharmonic maps, interpolating sesqui-harmonic maps.
\end{abstract}
	
\maketitle

\section{Introduction}
Among the pioneering works on the existence of harmonic maps between Riemannian manifolds is the seminal paper by Eells and Sampson~\cite{E.S}, which established that any smooth map from a compact Riemannian manifold into a target manifold with non-positive sectional curvature can be deformed into a harmonic map. Since then, numerous contributions have advanced the theory of harmonic maps; see, for example,~\cite{E.L1, E.L2, Jia}.

The problem of constructing harmonic maps defined by vector fields (respectively, covector fields) has also attracted considerable attention. Such maps can be viewed as mappings from a Riemannian manifold into its tangent bundle (respectively, cotangent bundle), or as compositions of vector fields (respectively, covector fields) with smooth maps. Problems of this type have been studied in the setting of tangent bundles (see~\cite{D.P, Kon, L.D.Z, Zag11}) and, more recently, in the setting of cotangent bundles (see~\cite{B.Z2, Z.G, Zag16}).

Biharmonic maps arise as a natural higher-order generalization of harmonic maps within the framework of variational geometry. They are defined as critical points of the bienergy functional. This notion was first introduced and systematically developed by G. Jiang in the mid-1980s~\cite{Jia}, who derived the associated Euler--Lagrange equations and established fundamental variational properties. Since Jiang’s pioneering work, biharmonic maps have been extensively investigated (see, for instance,~\cite{A.K.O, B.K, Bal, D.Z4, M.U}), leading to a rich theory that includes rigidity results, classification problems, and applications to submanifold geometry and geometric analysis.

Interpolating sesqui-harmonic maps were introduced and first systematically studied by Caddeo, Montaldo, and Oniciuc in the early 2010s~\cite{C.M.O1}. Motivated by the theory of biharmonic maps, the authors considered a variational functional obtained as a linear combination of the Dirichlet energy and the bienergy, thereby interpolating between harmonic and biharmonic maps; see also~\cite{ Bra1, Bra2, C.M.O2, K.A.O, K.O.D}.

In parallel with these developments, the geometry of tangent bundles endowed with natural metrics has become an active area of research. Among the most studied examples are the Sasaki metric \cite{Sas} and its various generalizations \cite{A.S, M.T}, which encode both the geometry of the base manifold and the behavior of vector fields. When the base manifold is endowed with additional structures, such as almost complex or almost product structures, the induced geometry on the tangent bundle becomes even richer \cite{Zag7,Zag21,Zag26}.

In this paper, we investigate several biharmonic and interpolating sesqui-harmonic problems on the tangent bundle endowed with $\varphi$-Sasaki metrics over a para-K\"{a}hler--Norden manifold $(M^{2m},\varphi,g)$. This study is carried out by analyzing the influence of certain geometric properties of the base manifold—such as the induced Levi-Civita connection and the Riemannian curvature—on its tangent bundle, as well as their effects on the behavior of biharmonic and interpolating sesqui-harmonic vector fields with respect to the $\varphi$-Sasaki metric.

After presenting a general introduction, Section~\ref{sec:02} recalls basic definitions and fundamental results concerning the geometry of the tangent bundle over a para-K\"{a}hler--Norden manifold $(M^{2m}, \varphi, g)$, including explicit formulas for the induced Levi-Civita connection and the Riemannian curvature tensors.

Section~\ref{sec:03} establishes the basic properties of the energy and bienergy functionals, as well as several harmonicity properties of vector fields with respect to the $\varphi$-Sasaki metric.

Section~\ref{sec:04} is devoted to the study of biharmonic problems for vector fields, considered both as biharmonic maps and as biharmonic vector fields. We provide necessary and sufficient conditions for the biharmonicity of vector fields in each case, including the first variation of the bienergy functional.

Finally, Section~\ref{sec:05} investigates interpolating sesqui-harmonic problems for vector fields, viewed as interpolating sesqui-harmonic maps and as interpolating sesqui-harmonic vector fields. Necessary and sufficient conditions for interpolating sesqui-harmonicity are established in each setting, including the first variation of the interpolating sesqui-energy functional. The paper concludes with several examples illustrating harmonicity, biharmonicity, and interpolating sesqui-harmonicity on tangent bundles endowed with $\varphi$-Sasaki metrics.

This work enlarges the class of biharmonic and interpolating sesqui-harmonic problems on tangent bundles, providing new insights into the relationship between a manifold and its tangent bundle, as well as into the notions of biharmonicity and interpolating sesqui-harmonicity in various problems related to vector fields.
 
\section{Basic properties of tangent bundle with the $\varphi$-Sasaki metric}\label{sec:02}
An almost complex manifold $(M,\varphi)$ is a differentiable manifold $M$ endowed with a tensor field of type $(1,1)$ satisfying
$$\varphi^{2}=I, \quad \varphi \neq \pm I,$$
%where $I$ denotes the identity tensor field of type $(1,1)$ on $M$, 
such that the two eigenbundles $TM^{+}$ and $TM^{-}$ associated with the eigenvalues $+1$ and $-1$ of $\varphi$, respectively, are required to have the same rank. Consequently, the dimension of an almost para-complex manifold is necessarily even.

An almost para-complex Norden manifold $(M^{2m}, \varphi, g)$ is an almost complex manifold $(M,\varphi)$ endowed with a Riemannian metric $g$ such that
\begin{equation*}
g(\varphi V, W) = g(V, \varphi W),
\end{equation*}
for all vector fields $V$ and $W$ on $M$. In this case, the metric $g$ is called a para-Norden metric (or $B$-metric) \cite{S.I.E}.

A para-K\"{a}hler--Norden manifold is an almost para-complex Norden manifold $(M^{2m}, \varphi, g)$ for which
$$\nabla \varphi = 0,$$
where $\nabla$ denotes the Levi-Civita connection associated with the metric $g$ \cite{S.G.I, S.I.E}.

On a para-K\"{a}hler--Norden manifold $(M^{2m}, \varphi, g)$ \cite{S.I.E}, the Riemann curvature tensor is pure; that is, the following relations hold:
\begin{equation}\label{cur-pure}
\mathrm{R}(\varphi V, W) = \mathrm{R}(V, \varphi W) = \mathrm{R}(V, W)\varphi = \varphi \mathrm{R}(V, W),
\end{equation}
for all vector fields $V$ and $W$ on $M$.

Now let $(M,g)$ be a smooth Riemannian manifold with Levi--Civita connection $\nabla$. We denote by $TM$ its tangent bundle and by $\pi \colon TM \to M$ the canonical projection.

For each point $(p,v) \in TM$, the tangent space $T_{(p,v)}TM$ admits a decomposition into horizontal and vertical distributions, denoted respectively by $\mathcal{H}_{(p,v)}$ and $\mathcal{V}_{(p,v)}$, with respect to the Levi--Civita connection $\nabla$, namely,
$$T_{(p,v)}TM = \mathcal{H}_{(p,v)} \oplus \mathcal{V}_{(p,v)}.$$

For a vector field $W$ on $M$, we denote by $W^{H}$ and $W^{V}$ its horizontal and vertical lifts to $TM$, respectively. These lifts satisfy the following properties:
$$d\pi(W^{H}) = W, \quad d\pi(W^{V}) = 0.$$

\begin{definition}\cite{Zag7,Zag21}
Given a para-K\"{a}hler--Norden manifold $(M^{2m}, \varphi, g)$. On its tangent bundle $TM$, we define the $\varphi$-Sasaki metric, denoted by $g^{\varphi}$, as follows:
\begin{eqnarray*}
 g^{\varphi}({}^{H}\!W,{}^{H}\!Z) &=&g(W,Z),\\
 g^{\varphi}({}^{H}\!W,{}^{V}\!Z) &=& 0,\\
 g^{\varphi}({}^{V}\!W,{}^{V}\!Z) &=& g(W,\varphi Z),
\end{eqnarray*}
for all vector fields $W$ and $Z$ on $M$.
\end{definition}

The Levi-Civita connection $\widetilde{\nabla}$ of the tangent bundle $TM$ endowed with the $\varphi$-Sasaki metric $g^{\varphi}$ is described in~\cite{Zag7} by the following result.

\begin{theorem}\label{thm:01}
Given a para-K\"{a}hler--Norden manifold $(M^{2m}, \varphi, g)$ and its tangent bundle $(TM,g^{\varphi})$ endowed with the $\varphi$-Sasaki metric. Then, the Levi-Civita connection $\widetilde{\nabla}$ satisfies:
\begin{eqnarray*}
(1)\quad (\widetilde{\nabla}_{{}^{H}\!W}{}^{H}\!Z)&=&{}^{H}\!(\nabla_{W}Z)-\frac{1}{2}{}^{V}\!(\mathrm{R}(W,Z)v),\\
(2)\quad (\widetilde{\nabla}_{{}^{H}\!W}{}^{V}\!Z)&=&{}^{V}\!(\nabla_{W}Z)+\dfrac{1}{2}{}^{H}\!(\mathrm{R}(\varphi v,Z)W),\\
(3)\quad (\widetilde{\nabla}_{{}^{V}\!W}{}^{H}\!Z)&=&\dfrac{1}{2}{}^{H}\!(\mathrm{R}(\varphi v,W)Z),\\
(4)\quad (\widetilde{\nabla}_{{}^{V}\!W}{}^{V}\!Z)&=&0,
\end{eqnarray*}
for all vector fields $W$ and $Z$ on $M$, where $\nabla$ is the Levi-Civita connection of $(M^{2m}, \varphi, g)$ and $\mathrm{R}$ is its curvature tensor.
\end{theorem}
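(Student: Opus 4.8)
The plan is to read $\widetilde{\nabla}$ off the Koszul formula. Recall that the Levi-Civita connection of $g^{\varphi}$ is the unique connection satisfying
\[
2g^{\varphi}(\widetilde{\nabla}_{X}Y,Z)=Xg^{\varphi}(Y,Z)+Yg^{\varphi}(X,Z)-Zg^{\varphi}(X,Y)+g^{\varphi}([X,Y],Z)-g^{\varphi}([X,Z],Y)-g^{\varphi}([Y,Z],X)
\]
for all vector fields $X,Y,Z$ on $TM$. It suffices to evaluate this identity when $X,Y$ and the test field $Z$ each range over horizontal and vertical lifts: indeed $g^{\varphi}$ restricted to the horizontal distribution equals $\pi^{*}g$, which is non-degenerate, $g^{\varphi}$ pairs horizontal and vertical lifts trivially, and $g^{\varphi}$ restricted to the vertical distribution equals $\pi^{*}\!\big(g(\cdot,\varphi\,\cdot)\big)$, which is again non-degenerate because $\varphi$ is invertible and $g$ is non-degenerate. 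Hence testing against all ${}^{H}\!U$ determines the horizontal component of $\widetilde{\nabla}_{X}Y$, and testing against all ${}^{V}\!U$ determines its vertical component.

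The ingredients I would assemble first are: (i) the classical Lie-bracket formulas for lifts, $[{}^{H}\!W,{}^{H}\!Z]={}^{H}\!([W,Z])-{}^{V}\!(\mathrm{R}(W,Z)v)$, $[{}^{H}\!W,{}^{V}\!Z]={}^{V}\!(\nabla_{W}Z)$, $[{}^{V}\!W,{}^{V}\!Z]=0$, together with their skew-symmetric counterparts; (ii) the action of lifts on pulled-back functions, ${}^{H}\!W(\pi^{*}f)=\pi^{*}(Wf)$ and ${}^{V}\!W(\pi^{*}f)=0$ for $f\in C^{\infty}(M)$, applied to $f=g(Y,Z)$ and to $f=g(Y,\varphi Z)$---the latter differentiated using $\nabla\varphi=0$; (iii) the symmetries of the curvature tensor of $(M,g)$, in particular the pair symmetry $g(\mathrm{R}(A,B)C,D)=g(\mathrm{R}(C,D)A,B)$; and (iv) the purity relations \eqref{cur-pure} together with the Norden identity $g(\varphi A,B)=g(A,\varphi B)$. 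With these at hand, in each of the four cases most terms of the Koszul expansion vanish, either because a horizontal lift is paired with a vertical one or because a vertical lift annihilates a $\pi^{*}$-function, and the surviving terms reduce to an expression of the form $g(\,\cdot\,,U)$ (horizontal test) or $g(\,\cdot\,,\varphi U)$ (vertical test). For the pair $({}^{H}\!W,{}^{H}\!Z)$ the horizontal test reproduces the Koszul formula of $(M,g)$ and yields ${}^{H}\!(\nabla_{W}Z)$, while the vertical test leaves exactly $-g(\mathrm{R}(W,Z)v,\varphi U)$, giving the term $-\tfrac12{}^{V}\!(\mathrm{R}(W,Z)v)$; and the pair $({}^{V}\!W,{}^{V}\!Z)$ collapses to $0$ after one use of the Norden identity.

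The substantive part---and the step I expect to be the main obstacle---is the curvature bookkeeping for the mixed pairs $({}^{H}\!W,{}^{V}\!Z)$ and $({}^{V}\!W,{}^{H}\!Z)$. Here a vertical bracket of two horizontal lifts contributes a term $g(\mathrm{R}(W,U)v,\varphi Z)$ (resp. $g(\mathrm{R}(Z,U)v,\varphi W)$) which must be recognised as a horizontal lift tested against ${}^{H}\!U$; I would do this by first moving $v$ into the leading slot via the pair symmetry, $g(\mathrm{R}(W,U)v,\varphi Z)=g(\mathrm{R}(v,\varphi Z)W,U)$, and then transferring $\varphi$ onto $v$ through the purity relation $\mathrm{R}(v,\varphi Z)=\mathrm{R}(\varphi v,Z)$, which produces $g(\mathrm{R}(\varphi v,Z)W,U)$ and hence the term $\tfrac12{}^{H}\!(\mathrm{R}(\varphi v,Z)W)$ in case $({}^{H}\!W,{}^{V}\!Z)$; the analogous manipulation yields $\tfrac12{}^{H}\!(\mathrm{R}(\varphi v,W)Z)$ in case $({}^{V}\!W,{}^{H}\!Z)$. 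In both mixed cases the remaining (vertical-test) terms simplify by $\nabla\varphi=0$ and the Norden identity: the vertical component of $\widetilde{\nabla}_{{}^{H}\!W}{}^{V}\!Z$ is ${}^{V}\!(\nabla_{W}Z)$, and that of $\widetilde{\nabla}_{{}^{V}\!W}{}^{H}\!Z$ vanishes. Collecting the horizontal and vertical components from the four cases gives formulas (1)--(4); as a final consistency check one may verify directly that the resulting $\widetilde{\nabla}$ is torsion-free and metric, which is in any case automatic from the Koszul derivation.
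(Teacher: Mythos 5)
Your derivation is correct, and there is nothing in the paper to compare it against: Theorem~\ref{thm:01} is stated here as a recalled result imported from~\cite{Zag7}, with no proof given in this manuscript. Your Koszul-formula argument is the standard (and almost certainly the original) route, and the two points you single out as the substantive ones are exactly right: (a) the vertical pairing $g^{\varphi}({}^{V}\!W,{}^{V}\!Z)=g(W,\varphi Z)$ is non-degenerate because $\varphi^{2}=I$, so testing against all ${}^{H}\!U$ and all ${}^{V}\!U$ does determine $\widetilde{\nabla}$, and the extra $\varphi$ is what converts the raw Koszul output $-\tfrac12 g(\mathrm{R}(W,Z)v,\varphi U)$ into the clean coefficient $-\tfrac12\,{}^{V}\!(\mathrm{R}(W,Z)v)$; (b) in the mixed cases the chain $g(\mathrm{R}(W,U)v,\varphi Z)=g(\mathrm{R}(v,\varphi Z)W,U)=g(\mathrm{R}(\varphi v,Z)W,U)$, using pair symmetry and then the purity relation \eqref{cur-pure}, is precisely where the characteristic $\mathrm{R}(\varphi v,\cdot)$ terms of the $\varphi$-Sasaki connection come from, and it is legitimate because $(M,\varphi,g)$ is para-K\"ahler--Norden (so \eqref{cur-pure} holds and $\nabla\varphi=0$ lets you differentiate $g(\cdot,\varphi\cdot)$ term by term). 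I verified all eight horizontal/vertical tests against your outline and they reproduce formulas (1)--(4) exactly, including the cancellation that kills the vertical component in case (3) and both components in case (4).
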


The Riemann curvature tensor $\widetilde{\mathrm{R}}$ of tangent bundle $(TM,g^{\varphi})$ is given in \cite{Zag21} by the following theorem.

\begin{theorem}\label{thm:02}
Given a para-K\"{a}hler--Norden manifold $(M^{2m}, \varphi, g)$ and its tangent bundle $(TM,g^{\varphi})$ endowed with the $\varphi$-Sasaki metric. Then, the following formulas hold:
\begin{eqnarray*}
\widetilde{\mathrm{R}}({}^{H}\!X,{}^{H}\!Y){}^{H}\!Z &=&{}^{H}\!(\mathrm{R}(X,Y)Z)+\dfrac{1}{2}{}^{H}\!(\mathrm{R}(\varphi v,\mathrm{R}(X,Y)v)Z))\notag\\
&&+\dfrac{1}{4}{}^{H}\!(\mathrm{R}(\varphi v,\mathrm{R}(X,Z)v)Y)-\dfrac{1}{4}{}^{H}\!(\mathrm{R}(\varphi v,\mathrm{R}(Y,Z)v)X)\\
&&+\dfrac{1}{2}{}^{V}\!((\nabla_{Z}R)(X,Y)v),\\
\widetilde{\mathrm{R}}({}^{H}\!X,{}^{V}\!Y){}^{V}\!Z&=&-\dfrac{1}{2}{}^{H}\!(\mathrm{R}(\varphi Y,Z)X)-\dfrac{1}{4}{}^{H}\!(\mathrm{R}(v,Y)\mathrm{R}(v,Z)X),\\
\widetilde{\mathrm{R}}({}^{V}\!X,{}^{V}\!Y){}^{H}\!Z&=&\dfrac{1}{4}{}^{H}\!(\mathrm{R}(v,X)\mathrm{R}(v,Y)Z)-\dfrac{1}{4}{}^{H}\!(\mathrm{R}(v,Y)\mathrm{R}(v,X)Z))\\
&&+{}^{H}\!\big(\mathrm{R}(\varphi X,Y)Z),\\
\widetilde{\mathrm{R}}({}^{H}\!X,{}^{V}\!Y){}^{H}\!Z&=&\dfrac{1}{4}{}^{V}\!(\mathrm{R}(\mathrm{R}(\varphi v,Y)Z,X)v)+\dfrac{1}{2}{}^{H}\!((\nabla_{X}R)(\varphi v,Y)Z)\\
&&+\dfrac{1}{2}{}^{V}\!(\mathrm{R}(X,Z)Y),\\
\widetilde{\mathrm{R}}({}^{H}\!X,{}^{H}\!Y){}^{V}\!Z&=&\dfrac{1}{2}{}^{H}\!((\nabla_{X}R)(\varphi v,Z)Y)-\dfrac{1}{2}{}^{H}\!((\nabla_{Y}R)(\varphi v,Z)X))\notag\\
&&+\dfrac{1}{4}{}^{V}\!(\mathrm{R}(\mathrm{R}(\varphi u,Z)Y,X)v)-\dfrac{1}{4}{}^{V}\!(\mathrm{R}(\mathrm{R}(\varphi v,Z)X,Y)v)\\
&&+{}^{V}\!(\mathrm{R}(X,Y)Z),\\
\widetilde{\mathrm{R}}({}^{V}\!X,{}^{V}\!Y){}^{V}\!Z&=&0,
\end{eqnarray*}	
for all vector fields $X,Y$ and $Z$  on $M$.
\end{theorem}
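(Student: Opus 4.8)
The plan is to compute $\widetilde{\mathrm{R}}$ directly from its definition
\[
\widetilde{\mathrm{R}}(\mathcal{X},\mathcal{Y})\mathcal{Z}
=\widetilde{\nabla}_{\mathcal{X}}\widetilde{\nabla}_{\mathcal{Y}}\mathcal{Z}
-\widetilde{\nabla}_{\mathcal{Y}}\widetilde{\nabla}_{\mathcal{X}}\mathcal{Z}
-\widetilde{\nabla}_{[\mathcal{X},\mathcal{Y}]}\mathcal{Z},
\]
inserting the four connection formulas from Theorem~\ref{thm:01} and reducing each term via the lift identities. First I would record the brackets of lifts that are needed, namely $[{}^{H}\!X,{}^{H}\!Y]={}^{H}\![X,Y]-{}^{V}\!(\mathrm{R}(X,Y)v)$, $[{}^{H}\!X,{}^{V}\!Y]={}^{V}\!(\nabla_X Y)$, and $[{}^{V}\!X,{}^{V}\!Y]=0$; these follow from the standard theory of the canonical horizontal/vertical splitting and are independent of the choice of metric on $TM$. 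It suffices to prove each identity for vector fields $X,Y,Z$ that are parallel at the base point $p$ (so $\nabla X=\nabla Y=\nabla Z=0$ at $p$), which kills the $\nabla_W Z$-type terms and all brackets except the curvature correction, dramatically shortening the algebra; the general case then follows by tensoriality of $\widetilde{\mathrm{R}}$.

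The bulk of the work is the case-by-case evaluation, one identity at a time. For $\widetilde{\mathrm{R}}({}^{H}\!X,{}^{H}\!Y){}^{H}\!Z$ I would expand $\widetilde{\nabla}_{{}^{H}\!X}({}^{H}\!(\nabla_Y Z)-\tfrac12{}^{V}\!(\mathrm{R}(Y,Z)v))$ using (1) and (2), then antisymmetrise in $X,Y$, and finally subtract $\widetilde{\nabla}_{[{}^{H}\!X,{}^{H}\!Y]}{}^{H}\!Z$; at the base point the horizontal part collects $\mathrm{R}(X,Y)Z$ together with the three $\tfrac14\mathrm{R}(\varphi v,\mathrm{R}(\cdot,\cdot)v)\cdot$ terms (one coming from $-\tfrac12\widetilde{\nabla}_{{}^{V}\!(\mathrm{R}(X,Y)v)}{}^{H}\!Z$ via (3), giving the $\tfrac12\mathrm{R}(\varphi v,\mathrm{R}(X,Y)v)Z$ summand, and the other two from the $\tfrac12\mathrm{R}(\varphi v,Z)W$ correction in (2)), while the vertical part, after using the second Bianchi identity and the definition of $\nabla_Z\mathrm{R}$, assembles into $\tfrac12{}^{V}\!((\nabla_Z\mathrm{R})(X,Y)v)$. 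The remaining four nonzero identities are handled by the same mechanism; the mixed ones $\widetilde{\mathrm{R}}({}^{H}\!X,{}^{V}\!Y){}^{V}\!Z$ and $\widetilde{\mathrm{R}}({}^{V}\!X,{}^{V}\!Y){}^{H}\!Z$ are the shortest since several connection terms vanish by (4), and $\widetilde{\mathrm{R}}({}^{V}\!X,{}^{V}\!Y){}^{V}\!Z=0$ is immediate from (4) and $[{}^{V}\!X,{}^{V}\!Y]=0$. Throughout, the purity relations \eqref{cur-pure} and $\nabla\varphi=0$ are used to move $\varphi$ past $\mathrm{R}$ and its covariant derivative, which is what makes the $\varphi v$ arguments consistent.

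The main obstacle I anticipate is bookkeeping rather than conceptual: correctly tracking the base-point values of the second covariant derivatives and, above all, the differentiation of the curvature correction itself. When one computes $\widetilde{\nabla}_{{}^{H}\!X}{}^{V}\!(\mathrm{R}(Y,Z)v)$ the horizontal lift argument $v$ is differentiated, producing the $\mathrm{R}(\varphi v,\mathrm{R}(Y,Z)v)X$-type terms and also a $(\nabla_X\mathrm{R})(Y,Z)v$ term; keeping the factor $v$-derivatives separate from the $(\nabla\mathrm{R})$-terms, and then correctly antisymmetrising and applying the differential Bianchi identity $(\nabla_X\mathrm{R})(Y,Z)+(\nabla_Y\mathrm{R})(Z,X)+(\nabla_Z\mathrm{R})(X,Y)=0$ to collapse three such terms into one, is the delicate step. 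A secondary subtlety is the appearance of $\varphi u$ versus $\varphi v$ in the stated formula for $\widetilde{\mathrm{R}}({}^{H}\!X,{}^{H}\!Y){}^{V}\!Z$ — I would treat $u$ as a typo for $v$ (the canonical fibre coordinate), since no second fibre variable has been introduced. Once these points are organised, each identity reduces to a finite, if lengthy, linear combination of curvature terms, and comparing with the claimed right-hand sides completes the proof.
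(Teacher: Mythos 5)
Your proposal is correct: the direct expansion of $\widetilde{\mathrm{R}}$ from its definition using the connection formulas of Theorem~\ref{thm:01}, the standard lift brackets, the normalization $\nabla X=\nabla Y=\nabla Z=0$ at the base point, the purity relations \eqref{cur-pure} (which indeed turn $\mathrm{R}(\varphi v,\cdot)\mathrm{R}(\varphi v,\cdot)$ into $\mathrm{R}(v,\cdot)\mathrm{R}(v,\cdot)$ via $\varphi^{2}=I$), and the second Bianchi identity for the vertical part of $\widetilde{\mathrm{R}}({}^{H}\!X,{}^{H}\!Y){}^{H}\!Z$ is exactly the standard derivation, and your reading of $u$ as a typo for the fibre point $v$ is right. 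Note that the present paper gives no proof of this statement at all — it imports the theorem verbatim from \cite{Zag21} — so there is nothing internal to compare against, but your computation is the one any such proof must carry out.
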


\section{Basic properties of Energy and bienergy functional}\label{sec:03}

Let $\phi:(M,g)\to (N,h)$ be a smooth map between two Riemannian manifolds. The map $\phi$ is said to be harmonic if it is a critical point of the energy functional
\begin{equation*}
\mathrm{E}(\phi)=\frac{1}{2}\int_{K}|d\phi|^{2}\,v_{g},
\end{equation*}
for any compact domain $K\subseteq M$. Here, $|d\phi|$ denotes the Hilbert--Schmidt norm of $d\phi$, and $v_{g}$ is the Riemannian volume form on $M$. Equivalently, $\phi$ is harmonic if it satisfies the associated Euler--Lagrange equation
\begin{equation*}
\tau(\phi)=\mathrm{Tr}_{g}\nabla d\phi=0,
\end{equation*}
where $\tau(\phi)$ is the tension field of $\phi$.

A smooth map $\phi$ is said to be biharmonic if it is a critical point of the bienergy functional
\begin{equation}\label{eq:01}
\mathrm{E}_{2}(\phi)=\frac{1}{2}\int_{K}|\tau(\phi)|^{2}\,v_{g}.
\end{equation}
Equivalently, $\phi$ is biharmonic if it satisfies the corresponding Euler--Lagrange equation
\begin{eqnarray}\label{eq:02}
\tau_{2}(\phi)=\Delta^{\phi}\tau(\phi)-\mathrm{Tr}_{g}\mathrm{R}^{N}(\tau(\phi),d\phi)d\phi=0,
\end{eqnarray}
where $\tau_{2}(\phi)$ denotes the bitension field of $\phi$, $\Delta^{\phi}$ is the rough Laplacian acting on sections of the pull-back bundle $\phi^{-1}TN$, defined by
\begin{eqnarray}\label{eq:03}
\Delta^{\phi}\tau(\phi)=-\mathrm{Tr}_{g}\left(\nabla^{\phi}_{\ast}\nabla^{\phi}_{\ast}-\nabla^{\phi}_{\nabla_{\ast}\ast}\right)\tau(\phi),
\end{eqnarray}
and $\mathrm{R}^{N}$ is the curvature tensor of the target manifold $N$. Clearly, every harmonic map is biharmonic; therefore, it is of interest to construct and study proper biharmonic maps, that is, biharmonic maps which are not harmonic.

The interpolating sesqui-energy functional is defined as a linear combination of the energy and the bienergy functionals:
\begin{equation}\label{eq:04}
\mathrm{E}_{\delta_{1},\delta_{2}}(\phi)=\delta_{1}\int_{K}|d\phi|^{2}\,v_{g}+\delta_{2}\int_{K}|\tau(\phi)|^{2}\,v_{g}=2\delta_{1}\mathrm{E}(\phi)+2\delta_{2}\mathrm{E}_{2}(\phi),
\end{equation}
where $\delta_{1},\delta_{2}\in\mathbb{R}$ see\cite{Bra1}. A map $\phi$ is called interpolating sesqui-harmonic if it is a critical point of the functional \eqref{eq:04}. Equivalently, $\phi$ satisfies the Euler--Lagrange equation \cite{K.A.O}
\begin{eqnarray}\label{eq:05}
\tau_{\delta_{1},\delta_{2}}(\phi)=\delta_{1}\tau(\phi)+\delta_{2}\tau_{2}(\phi)=0,
\end{eqnarray}
where $\tau_{\delta_{1},\delta_{2}}(\phi)$ is referred to as the interpolating sesqui-tension field. Clearly, every harmonic map is interpolating sesqui-harmonic. Consequently, the study of proper interpolating sesqui-harmonic maps is of particular interest.\quad

Below, we present several harmonic properties of the tangent bundle endowed with $\varphi$-Sasakian metrics over a para-K\"{a}hler--Norden manifold $(M^{2m}, \varphi, g)$. We begin with the following fundamental property:
\begin{eqnarray}\label{eq:06}
d_{p}Y(X_{p}) = {}^{H}\!X_{(p,v)} + {}^{V}\!(\nabla_X Y)_{(p,v)},
\end{eqnarray}
for all vector fields $X$ and $Y$ on $M$ and for any $(p,v) \in TM$ such that $Y_{p} = v$ (see \cite{Kon, L.D.Z}).

A vector field $\xi$ on $(M^{2m}, g, \varphi)$ can be viewed as an immersion
$$
\xi : (M^{2m}, g, \varphi) \to (TM, g^{\varphi}),
$$
into its tangent bundle $TM$ endowed with the $\varphi$-Sasaki metric $g^{\varphi}$.

The energy $\mathrm{E}(\xi)$ of the vector field $\xi$ is defined as the energy of the corresponding map
$$\xi : (M^{2m}, \varphi, g) \to (TM, g^{\varphi}).$$

If $(M^{2m}, \varphi, g)$ is a compact para-K\"{a}hler--Norden manifold, then $\mathrm{E}(\xi)$ is given in  \cite{Zag26} by
\begin{equation*}
\mathrm{E}(\xi) = m\,\mathrm{Vol}(M) + \frac{1}{2}\int_M \mathrm{Tr}_{g} \, g(\nabla_{\ast}\xi, \varphi \nabla_{\ast}\xi)\, v_{g}.
\end{equation*}

The tension field $\tau(\xi)$ of $\xi$ is expressed in \cite{Zag26} as
\begin{equation}\label{eq:08}
\tau(\xi) = {}^{H}\!S(\xi) - {}^{V}\!\bar{\Delta}\xi,
\end{equation}
where
\begin{equation}\label{eq:09}
\bar{\Delta}\xi = -\mathrm{Tr}_{g} \nabla^{2} \xi= -\mathrm{Tr}_{g} \bigl(\nabla_{\ast}\nabla_{\ast} - \nabla_{\nabla_{\ast}\ast}\bigr)\xi
\end{equation}
is the rough Laplacian of $\xi$, and
\begin{equation}\label{eq:10}
S(\xi) = \mathrm{Tr}_{g} \mathrm{R}(\varphi \xi, \nabla_{\ast} \xi)\ast.
\end{equation}

Moreover, for any smooth function $f$ on $M$, the following formulas hold (see \cite{Zag26}):
\begin{equation}\label{eq:11}
\bar{\Delta}(f\xi) = f\,\bar{\Delta}\xi - (\Delta f)\xi - 2 \nabla_{\mathrm{grad}\, f} \xi,
\end{equation}
where $\Delta f$ denotes the Laplace operator of $f$ and $\mathrm{grad}\, f$ its gradient, and
\begin{equation}\label{eq:12}
S(f\xi) = f^{2} S(\xi).
\end{equation}

A vector field $\xi$ on $M$ is a harmonic map from $(M^{2m}, g, \varphi)$ to $(TM, g^{\varphi})$ if and only if
\begin{equation}\label{eq:13}
\tau(\xi) = 0\Leftrightarrow S(\xi) = 0 \quad \text{and} \quad \bar{\Delta}\xi = 0.
\end{equation}
The vector field $\xi$ is said to be a harmonic vector field if it is a critical point of the energy functional $\mathrm{E}(\xi)$ with respect to variations through vector fields, which is equivalent to $\bar{\Delta}\xi = 0.$

Moreover, $\xi$ is a harmonic map if and only if $\xi$ is a harmonic vector field and satisfies $S(\xi) = 0$ (see \cite{Zag26}).

\section{Biharmonicity of vector field $\xi:(M^{2m},\varphi,g)\to (TM,g^{\varphi})$} \label{sec:04}
\begin{theorem}
Given a para-K\"{a}hler--Norden manifold $(M^{2m}, \varphi, g)$ and its tangent bundle $(TM,g^{\varphi})$ endowed with the $\varphi$-Sasaki metric. For a vector field $\xi$ on $M$, the bi-tension field associated with the map $\xi:(M^{2m},\varphi,g)\to (TM,g^{\varphi})$ is given by: 
\begin{eqnarray}\label{eq:14}
\tau_{2}(\xi) &=&{}^{H}\!\big[\bar{\Delta}S(\xi)+\mathrm{R}(\varphi \xi,\bar{\Delta}\xi)S(\xi)-\mathrm{Tr}_{g}\big((\nabla_{S(\xi)}R)(\varphi \xi,\nabla_{\ast}\xi)\ast\nonumber\\
&&+\mathrm{R}(\varphi \xi,\nabla_{\ast}\xi)\nabla_{\ast}S(\xi)-\mathrm{R}(\varphi \xi,\mathrm{R}(\ast,S(\xi)) \xi)\ast+\mathrm{R}(S(\xi),\ast)\ast\nonumber\\
&&-\mathrm{R}(\varphi \xi,\nabla_{\ast}\bar{\Delta}\xi)\ast-\mathrm{R}(\varphi\bar{\Delta}\xi,\nabla_{\ast}\xi)\ast\big)\big]\nonumber\\
&&+{}^{V}\!\big[-\bar{\Delta}^{2}\xi+\mathrm{Tr}_{g}\big((\nabla_{\ast}R)(\ast,S(\xi))\xi+ \mathrm{R}(\ast,\nabla_{\ast}S(\xi))\xi\nonumber\\
&&+2\mathrm{R}(\ast,S(\xi))\nabla_{\ast}\xi\big)\big],
\end{eqnarray}
where $\bar{\Delta}^{2}\xi=\bar{\Delta}\bar{\Delta}\xi$.
\end{theorem}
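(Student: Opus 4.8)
The plan is to compute the bitension field of $\xi$, viewed as the map $\xi\colon(M^{2m},\varphi,g)\to(TM,g^{\varphi})$, directly from \eqref{eq:02}, namely $\tau_2(\xi)=\Delta^{\xi}\tau(\xi)-\mathrm{Tr}_g\widetilde{\mathrm{R}}(\tau(\xi),d\xi)d\xi$, feeding in \eqref{eq:08} for $\tau(\xi)$, \eqref{eq:06} for $d\xi$, Theorem~\ref{thm:01} for covariant differentiation of lifts, and Theorem~\ref{thm:02} for the curvature term. Throughout I would fix a point $p\in M$ and a local $g$-orthonormal frame $\{e_i\}$ that is geodesic at $p$ (so $\nabla_{e_i}e_j$ vanishes at $p$); this makes the term $\nabla^{\xi}_{\nabla_{\ast}\ast}$ of \eqref{eq:03} disappear at $p$ and reduces the rough Laplacian there to $\Delta^{\xi}\tau(\xi)=-\sum_i\nabla^{\xi}_{e_i}\nabla^{\xi}_{e_i}\tau(\xi)$.

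The first step is to record the two differentiation rules obtained by combining \eqref{eq:06} with Theorem~\ref{thm:01} (with $v$ replaced by $\xi$): for any vector field $Z$ on $M$,
\begin{align*}
\nabla^{\xi}_{X}\big({}^{H}\!Z\big)&={}^{H}\!(\nabla_{X}Z)-\tfrac12{}^{V}\!(\mathrm{R}(X,Z)\xi)+\tfrac12{}^{H}\!(\mathrm{R}(\varphi\xi,\nabla_{X}\xi)Z),\\
\nabla^{\xi}_{X}\big({}^{V}\!Z\big)&={}^{V}\!(\nabla_{X}Z)+\tfrac12{}^{H}\!(\mathrm{R}(\varphi\xi,Z)X).
\end{align*}
Applying these to $\tau(\xi)={}^{H}\!S(\xi)-{}^{V}\!\bar\Delta\xi$ gives $\nabla^{\xi}_{e_i}\tau(\xi)$ as an explicit sum of horizontal and vertical lifts; applying them once more, summing over $i$ at $p$, and separating horizontal from vertical parts yields $\Delta^{\xi}\tau(\xi)$. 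Here the inner second derivatives of $S(\xi)$ and $\bar\Delta\xi$ reassemble into $\bar\Delta S(\xi)$ and $\bar\Delta^{2}\xi$ via \eqref{eq:09}, while differentiating $S(\xi)=\mathrm{Tr}_g\mathrm{R}(\varphi\xi,\nabla_{\ast}\xi)\ast$ as a genuine vector field on $M$ (using $\nabla\varphi=0$) produces the $(\nabla_{S(\xi)}\mathrm{R})$ and $(\nabla_{\ast}\mathrm{R})(\ast,S(\xi))\xi$ terms, together with the curvature terms $\mathrm{R}(\varphi\xi,\bar\Delta\xi)S(\xi)$, $\mathrm{R}(\varphi\xi,\nabla_{\ast}\xi)\nabla_{\ast}S(\xi)$, $\mathrm{R}(\varphi\bar\Delta\xi,\nabla_{\ast}\xi)\ast$, $\mathrm{R}(\varphi\xi,\nabla_{\ast}\bar\Delta\xi)\ast$, and $\mathrm{R}(\ast,\nabla_{\ast}S(\xi))\xi$, $\mathrm{R}(\ast,S(\xi))\nabla_{\ast}\xi$.

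The second step is to expand $\mathrm{Tr}_g\widetilde{\mathrm{R}}(\tau(\xi),d\xi)d\xi=\sum_i\widetilde{\mathrm{R}}\big({}^{H}\!S(\xi)-{}^{V}\!\bar\Delta\xi,\;{}^{H}\!e_i+{}^{V}\!\nabla_{e_i}\xi\big)\big({}^{H}\!e_i+{}^{V}\!\nabla_{e_i}\xi\big)$ by trilinearity into eight terms, each reduced by the skew-symmetry of $\widetilde{\mathrm{R}}$ in its first two slots to one of the five nonzero types of Theorem~\ref{thm:02} (the $\widetilde{\mathrm{R}}({}^{V}\!\cdot,{}^{V}\!\cdot){}^{V}\!\cdot$ term vanishes). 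Tracing over $i$ and substituting Theorem~\ref{thm:02} produces a further batch of horizontal and vertical curvature terms, including the ``double-curvature'' pieces $\mathrm{R}(\ast,S(\xi))\ast$ and $\mathrm{R}(\varphi\xi,\mathrm{R}(\ast,S(\xi))\xi)\ast$. Subtracting this from $\Delta^{\xi}\tau(\xi)$, and then simplifying with the purity relations \eqref{cur-pure} (which, since $\nabla\varphi=0$, extend to $(\nabla_W\mathrm{R})$), the skew-symmetry $\mathrm{R}(X,Y)=-\mathrm{R}(Y,X)$, and the first and second Bianchi identities to merge the various $(\nabla\mathrm{R})$ contributions, collapses the whole expression into \eqref{eq:14}; that the horizontal leading term is $+\,{}^{H}\!\bar\Delta S(\xi)$ and the vertical leading term is $-\,{}^{V}\!\bar\Delta^{2}\xi$ follows from the minus sign in the rough Laplacian together with $\bar\Delta=-\mathrm{Tr}_g\nabla^{2}$.

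The step I expect to be the main obstacle is this last consolidation: after the second covariant derivative and the curvature expansion there are several dozen mixed horizontal/vertical lift terms in play, many carrying the $\tfrac14$-order double-curvature tails inherited from Theorem~\ref{thm:01}(2)--(3) and Theorem~\ref{thm:02}, and obtaining the stated compact form requires a careful, sign-consistent cancellation of those tails against one another, together with a disciplined use of purity and the Bianchi identities to regroup the $(\nabla\mathrm{R})$ terms exactly into the combination appearing in \eqref{eq:14}. Fixing one convention for the curvature tensor and for the sign of the Laplacian, and keeping it through every substitution, is what makes the cancellations manageable.
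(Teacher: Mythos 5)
Your proposal follows essentially the same route as the paper: fix a geodesic orthonormal frame at $p$ with $\xi_p=v$, compute $\nabla^{\xi}_{e_i}\tau(\xi)=\widetilde{\nabla}_{d\xi(e_i)}\tau(\xi)$ from Theorem~\ref{thm:01} and \eqref{eq:06}, iterate and trace to get $\Delta^{\xi}\tau(\xi)$, expand $\mathrm{Tr}_{g}\widetilde{\mathrm{R}}(\tau(\xi),d\xi)d\xi$ via Theorem~\ref{thm:02}, and consolidate using purity and the (second) Bianchi identity. The two intermediate displayed derivative rules you state are correct specializations of Theorem~\ref{thm:01} along $d\xi$, and your identification of the final cancellation of the $\tfrac14$-order double-curvature tails as the delicate step matches where the paper's computation does its work.
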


\begin{proof}
Let $(p,v)\in TM$, and $\{e_{i}\}_{i=\overline{1,2m}}$ be a local orthonormal frame on $(M^{2m},\varphi,g)$ such that $\nabla_{e_{i}}e_{i}=0$ at the point $p$ and $\xi_{p}=v$. From~$\eqref{eq:02}$ and~$\eqref{eq:03}$, we have 
\begin{eqnarray}\label{eq:15}
\tau_{2}(\xi)=\Delta^{\xi}\tau(\xi)-\mathrm{Tr}_{g}\widetilde{\mathrm{R}}(\tau(\xi),d\xi)d\xi,
\end{eqnarray}
To compute this expression, we apply Theorems~\ref{thm:01} and~\ref{thm:02}, together with formulas~\eqref{cur-pure}, \eqref{eq:06}, and~\eqref{eq:08}. A direct computation yields
\begin{eqnarray*}
\nabla^{\xi}_{e_{i}}\tau(\xi)&=&\widetilde{\nabla}_{d\xi (e_{i})}\tau(\xi)\\
&=&{}^{H}\!\big[\nabla_{e_{i}}S(\xi)-\frac{1}{2}\mathrm{R}(\varphi \xi,\bar{\Delta}\xi)e_{i}+\frac{1}{2}\mathrm{R}(\varphi \xi,\nabla_{e_{i}}\xi)S(\xi)\big]\\
&&-{}^{V}\!\big[\nabla_{e_{i}}\bar{\Delta}\xi+\frac{1}{2} \mathrm{R}(e_{i},S(\xi))\xi\big],
\end{eqnarray*}
and consequently
\begin{eqnarray}\label{eq:16}
\Delta^{\xi}\tau(\xi)&=&-\sum_{i=1}^{m}\nabla^{\xi}_{e_{i}}\nabla^{\xi}_{e_{i}}\tau(\xi)\nonumber\\
&=&-\sum_{i=1}^{m}\widetilde{\nabla}_{d\xi (e_{i})}\widetilde{\nabla}_{d\xi (e_{i})}\tau(\xi)\nonumber\\ 
&=&{}^{H}\!\big[\bar{\Delta}S(\xi)+\mathrm{Tr}_{g}\big(\frac{1}{2}\nabla_{\ast}(\mathrm{R}(\varphi \xi,\bar{\Delta}\xi)\ast)-\frac{1}{2}\nabla_{\ast}(\mathrm{R}(\varphi \xi,\nabla_{\ast}\xi)S(\xi))\nonumber\\
&&-\frac{1}{2}\mathrm{R}(\varphi \xi,\nabla_{\ast}\xi)\nabla_{\ast}S(\xi)+\frac{1}{2}(\mathrm{R}(\varphi \xi,\nabla_{\ast}\bar{\Delta}\xi)\ast\nonumber\\
&&+\frac{1}{4}\mathrm{R}(\varphi \xi,\mathrm{R}(\ast,S(\xi))\xi)\ast+\frac{1}{4}\mathrm{R}(\varphi \xi,\nabla_{\ast}\xi)\mathrm{R}(\varphi \xi,\bar{\Delta}\xi)\ast\nonumber\\
&&-\frac{1}{4}\mathrm{R}(\xi,\nabla_{\ast}\xi)\mathrm{R}(\xi,\nabla_{\ast}\xi)S(\xi)\big)\big]\nonumber\\
&&+{}^{V}\!\big[-\bar{\Delta}^{2}\xi+\mathrm{Tr}_{g}\big(\frac{1}{2}\nabla_{\ast}(\mathrm{R}(\ast,S(\xi))\xi)+\frac{1}{2} \mathrm{R}(\ast,\nabla_{\ast}S(\xi))\xi\nonumber\\
&&+\frac{1}{4} \mathrm{R}(\ast,\mathrm{R}(\varphi \xi,\nabla_{\ast}\xi)S(\xi))\xi-\frac{1}{4}\mathrm{R}(\ast,\mathrm{R}(\varphi \xi,\bar{\Delta}\xi)\ast)\xi\big)\big].
\end{eqnarray}
Similarly, we obtain	
\begin{eqnarray}\label{eq:17}
-\mathrm{Tr}_{g}\widetilde{\mathrm{R}}(\tau(\xi),d\xi)d\xi&=&-\sum_{i=1}^{m}\widetilde{\mathrm{R}}(\tau(\xi),d\xi(e_{i}))d\xi(e_{i})\nonumber\\
&=&{}^{H}\!\big[\frac{1}{2}\mathrm{R}(\varphi \xi,\bar{\Delta}\xi)S(\xi)+\mathrm{Tr}_{g}\big(-\mathrm{R}(S(\xi),\ast)\ast\nonumber\\
&&-\frac{3}{4}\mathrm{R}(\varphi \xi,\mathrm{R}(S(\xi),\ast)\xi)\ast-(\nabla_{S(\xi)}R)(\varphi \xi,\nabla_{\ast}\xi)\ast\nonumber\\
&&-\frac{1}{2}(\nabla_{\ast}R)(\varphi \xi,\bar{\Delta}\xi)\ast+\frac{1}{4}\mathrm{R}(\xi,\nabla_{\ast}\xi)\mathrm{R}(\xi,\bar{\Delta}\xi)\ast\nonumber\\
&&+\frac{1}{2}(\nabla_{\ast}R)(\varphi \xi,\nabla_{\ast}\xi)S(\xi)+\frac{3}{2}\mathrm{R}(\varphi\bar{\Delta}\xi,\nabla_{\ast}\xi)\ast\nonumber\\
&&+\frac{1}{4}\mathrm{R}(\xi,\nabla_{\ast}\xi)\mathrm{R}(\xi,\nabla_{\ast}\xi)S(\xi)\big)\big]\nonumber\\
&&+{}^{V}\!\big[\mathrm{Tr}_{g}\big(-\frac{1}{2}(\nabla_{\ast}R)(S(\xi),\ast)\xi-\frac{3}{2} \mathrm{R}(S(\xi),\ast)\nabla_{\ast}\xi\nonumber\\
&&+\frac{1}{4}\mathrm{R}(\mathrm{R}(\varphi \xi,\nabla_{\ast}\xi)S(\xi),\ast)\xi-\frac{1}{4}\mathrm{R}(\mathrm{R}(\varphi \xi,\bar{\Delta}\xi)\ast,\ast)\xi\big)\big].
\end{eqnarray}
Finally, substituting~\eqref{eq:16} and~\eqref{eq:17} into~\eqref{eq:15} and applying the second Bianchi identity, we obtain the desired expression~\eqref{eq:14}.
\end{proof}

\begin{theorem}\label{thm:03}
Given a para-K\"{a}hler--Norden manifold $(M^{2m}, \varphi, g)$ and its tangent bundle $(TM,g^{\varphi})$ endowed with the $\varphi$-Sasaki metric. For a vector field $\xi$ on $M$. Then, $\xi:(M^{2m},\varphi,g)\to (TM,g^{\varphi})$ is biharmonic map if and only if the following two conditions are satisfied 
\begin{align*}
\bar{\Delta}S(\xi)&+\mathrm{R}(\varphi \xi,\bar{\Delta}\xi)S(\xi)-\mathrm{Tr}_{g}\big(\mathrm{R}(S(\xi),\ast)\ast+(\nabla_{S(\xi)}R)(\varphi \xi,\nabla_{\ast}\xi)\ast\nonumber\\
&+\mathrm{R}(\varphi \xi,\nabla_{\ast}\xi)\nabla_{\ast}S(\xi)-\mathrm{R}(\varphi \xi,\mathrm{R}(\ast,S(\xi)) \xi)\ast-\mathrm{R}(\varphi \xi,\nabla_{\ast}\bar{\Delta}\xi)\ast\nonumber\\
&-\mathrm{R}(\varphi\bar{\Delta}\xi,\nabla_{\ast}\xi)\ast\big)=0,
\end{align*}
and
\begin{align*}
\bar{\Delta}^{2}\xi-\mathrm{Tr}_{g}\big((\nabla_{\ast}R)(\ast,S(\xi))\xi+ \mathrm{R}(\ast,\nabla_{\ast}S(\xi))\xi+2\mathrm{R}(\ast,S(\xi))\nabla_{\ast}\xi\big)=0. 
\end{align*}
\end{theorem}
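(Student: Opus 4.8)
The plan is to deduce this characterization directly from the closed-form expression \eqref{eq:14} for the bi-tension field $\tau_{2}(\xi)$ obtained in the preceding theorem. By definition (see \eqref{eq:02}), the map $\xi:(M^{2m},\varphi,g)\to(TM,g^{\varphi})$ is a biharmonic map precisely when $\tau_{2}(\xi)=0$ at every point $(p,v)\in TM$ with $v=\xi_{p}$. Hence the whole content of the statement is the observation that this single vector equation is equivalent to the conjunction of the two displayed equations on vector fields over $M$.

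First I would use the pointwise splitting $T_{(p,v)}TM=\mathcal{H}_{(p,v)}\oplus\mathcal{V}_{(p,v)}$ recalled in Section~\ref{sec:02}. Since the horizontal and vertical lifts of vector fields on $M$ take values in the complementary subspaces $\mathcal{H}_{(p,v)}$ and $\mathcal{V}_{(p,v)}$, and each lift is injective on $T_{p}M$, for any vector fields $A$ and $B$ on $M$ one has ${}^{H}\!A_{(p,v)}+{}^{V}\!B_{(p,v)}=0$ if and only if $A_{p}=0$ and $B_{p}=0$. In \eqref{eq:14} the bi-tension field is presented exactly in the form ${}^{H}\![\,\cdots\,]+{}^{V}\![\,\cdots\,]$, so applying this remark shows that $\tau_{2}(\xi)=0$ holds if and only if the horizontal bracket in \eqref{eq:14} vanishes and the vertical bracket in \eqref{eq:14} vanishes.

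It then remains only to identify these two vanishing conditions with the equations in the statement. Collecting the term $\mathrm{R}(S(\xi),\ast)\ast$ inside the trace, the horizontal bracket of \eqref{eq:14} is literally the left-hand side of the first displayed equation; multiplying the vertical bracket of \eqref{eq:14} by $-1$ (which does not affect its vanishing) yields the left-hand side of the second displayed equation. This establishes both implications at once.

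I do not expect any genuine obstacle here: all of the substantive work — deriving \eqref{eq:14} from Theorems~\ref{thm:01} and~\ref{thm:02}, the purity relations \eqref{cur-pure}, the differential formula \eqref{eq:06}, and the second Bianchi identity — has already been carried out. The present theorem is the formal corollary of that formula combined with the linear independence of horizontal and vertical lifts; the only point worth flagging is the innocuous overall sign carried by the vertical component when it is rewritten as in the statement.
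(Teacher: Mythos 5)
Your proposal is correct and matches the paper's (implicit) reasoning: the paper states Theorem~\ref{thm:03} as an immediate consequence of the bi-tension formula~\eqref{eq:14}, exactly as you argue, using that a sum of a horizontal and a vertical lift vanishes iff both underlying vector fields vanish. Your identification of the two brackets with the two displayed conditions (reordering one trace term and flipping the sign of the vertical component) is accurate, so nothing is missing.
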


As an immediate consequence of~\eqref{eq:13} and Theorem~\ref{thm:03}, we obtain the following result.

\begin{corollary}
Given a para-K\"{a}hler--Norden manifold $(M^{2m}, \varphi, g)$ and its tangent bundle $(TM,g^{\varphi})$ endowed with the $\varphi$-Sasaki metric. For a vector field $\xi$ on $M$. If $\xi$ is a harmonic map then $\xi$ also is a biharmonic map.
\end{corollary}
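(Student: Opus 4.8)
The plan is to read off the two biharmonicity conditions from Theorem~\ref{thm:03} and check that they are both trivially satisfied once $\xi$ is a harmonic map. By the characterization~\eqref{eq:13}, $\xi$ being a harmonic map means precisely that $S(\xi)=0$ and $\bar{\Delta}\xi=0$ identically as tensor fields on $M^{2m}$. First I would record the consequence that, since $S(\xi)$ and $\bar{\Delta}\xi$ vanish everywhere, so do all of their covariant derivatives along any directions, as well as $\bar{\Delta}S(\xi)$, $\nabla_{\ast}S(\xi)$, $\nabla_{\ast}\bar{\Delta}\xi$, and $\bar{\Delta}^{2}\xi=\bar{\Delta}\bar{\Delta}\xi$.

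Next I would substitute these vanishings term by term into the first condition of Theorem~\ref{thm:03}. Every summand there carries at least one factor drawn from the list $\{S(\xi),\ \nabla_{\ast}S(\xi),\ \bar{\Delta}S(\xi),\ \bar{\Delta}\xi,\ \nabla_{\ast}\bar{\Delta}\xi\}$: for instance $\bar{\Delta}S(\xi)=0$, the curvature terms $\mathrm{R}(\varphi\xi,\bar{\Delta}\xi)S(\xi)$, $\mathrm{R}(S(\xi),\ast)\ast$, $\mathrm{R}(\varphi\xi,\mathrm{R}(\ast,S(\xi))\xi)\ast$ vanish because they contain $S(\xi)$ or $\bar{\Delta}\xi$, the derivative-of-curvature term $(\nabla_{S(\xi)}R)(\varphi\xi,\nabla_{\ast}\xi)\ast$ vanishes because it is differentiated in the direction $S(\xi)=0$, and $\mathrm{R}(\varphi\xi,\nabla_{\ast}\xi)\nabla_{\ast}S(\xi)$, $\mathrm{R}(\varphi\xi,\nabla_{\ast}\bar{\Delta}\xi)\ast$, $\mathrm{R}(\varphi\bar{\Delta}\xi,\nabla_{\ast}\xi)\ast$ vanish because $\nabla_{\ast}S(\xi)=\nabla_{\ast}\bar{\Delta}\xi=\bar{\Delta}\xi=0$. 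Hence the whole left-hand side is $0$. The same bookkeeping applies to the second condition: $\bar{\Delta}^{2}\xi=0$, and each of $(\nabla_{\ast}R)(\ast,S(\xi))\xi$, $\mathrm{R}(\ast,\nabla_{\ast}S(\xi))\xi$, $\mathrm{R}(\ast,S(\xi))\nabla_{\ast}\xi$ contains $S(\xi)$ or $\nabla_{\ast}S(\xi)$ and therefore vanishes. Both conditions of Theorem~\ref{thm:03} hold, so $\xi$ is a biharmonic map.

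Equivalently, and even more directly, one may substitute $S(\xi)=0$ and $\bar{\Delta}\xi=0$ into formula~\eqref{eq:14} for $\tau_{2}(\xi)$ and observe by the same term-by-term inspection that $\tau_{2}(\xi)={}^{H}\!0-{}^{V}\!0=0$. There is essentially no obstacle here: the argument is purely formal, and the only point worth flagging is that harmonicity is needed as an identity on all of $M^{2m}$ (not merely pointwise) so that the covariant derivatives $\nabla_{\ast}S(\xi)$, $\bar{\Delta}S(\xi)$, $\nabla_{\ast}\bar{\Delta}\xi$, $\bar{\Delta}^{2}\xi$ may be taken to vanish — which is automatic from~\eqref{eq:13}. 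This also reproves the obvious fact that every harmonic map is biharmonic, now specialized to the map $\xi:(M^{2m},\varphi,g)\to(TM,g^{\varphi})$.
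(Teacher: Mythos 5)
Your argument is correct and coincides with the paper's own reasoning: the corollary is stated there as an immediate consequence of~\eqref{eq:13} together with Theorem~\ref{thm:03}, which is exactly the term-by-term substitution of $S(\xi)=0$ and $\bar{\Delta}\xi=0$ that you carry out. Your explicit bookkeeping (including the remark that the vanishing must hold identically so that all covariant derivatives vanish too) simply spells out what the paper leaves implicit.
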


\begin{corollary}
Given a para-K\"{a}hler--Norden manifold $(M^{2m}, \varphi, g)$ and its tangent bundle $(TM,g^{\varphi})$ endowed with the $\varphi$-Sasaki metric. Then every parallel vector field on $M$ is a biharmonic map.
\end{corollary}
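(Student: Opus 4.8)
The plan is to verify that a parallel vector field $\xi$ (i.e., $\nabla\xi=0$) satisfies the two biharmonicity conditions of Theorem~\ref{thm:03}. First I would observe that $\nabla\xi=0$ immediately forces $\nabla_{*}\xi=0$ everywhere, hence by \eqref{eq:09} and \eqref{eq:10} both $\bar{\Delta}\xi=-\mathrm{Tr}_{g}\nabla^{2}\xi=0$ and $S(\xi)=\mathrm{Tr}_{g}\mathrm{R}(\varphi\xi,\nabla_{*}\xi)*=0$, since every term in the trace contains a factor $\nabla_{e_{i}}\xi$. In particular, by \eqref{eq:13}, such a $\xi$ is already a harmonic map, so the preceding corollary applies and there is nothing left to prove; but it is worth also checking the conditions of Theorem~\ref{thm:03} directly for completeness.

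Next I would substitute $S(\xi)=0$, $\bar{\Delta}\xi=0$, and $\nabla_{*}\xi=0$ into each displayed condition. In the first condition, $\bar{\Delta}S(\xi)=\bar{\Delta}(0)=0$; the curvature term $\mathrm{R}(\varphi\xi,\bar{\Delta}\xi)S(\xi)$ vanishes because each of its two arguments (beyond $\varphi\xi$) is zero; and inside the trace, every single summand carries at least one factor among $S(\xi)$, $\bar{\Delta}\xi$, $\nabla_{*}S(\xi)$, or $\nabla_{*}\xi$—all of which are zero (note $\nabla_{*}S(\xi)=\nabla_{*}(0)=0$ and $\nabla_{*}\bar{\Delta}\xi=0$ as well). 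Hence the whole left-hand side is $0$. For the second condition, $\bar{\Delta}^{2}\xi=\bar{\Delta}(\bar{\Delta}\xi)=\bar{\Delta}(0)=0$, and each of the three trace terms contains a factor of $S(\xi)$ or $\nabla_{*}S(\xi)$, all zero; so that condition holds as well.

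The argument is essentially a bookkeeping exercise: the only thing to be careful about is confirming that \emph{every} term in the rather long expressions of Theorem~\ref{thm:03} does indeed contain at least one of the vanishing quantities, and in particular that derivatives of vanishing quantities (such as $\nabla_{*}S(\xi)$, $\nabla_{*}\bar{\Delta}\xi$, $\bar{\Delta}S(\xi)$) are themselves zero—which they are, since $S(\xi)$ and $\bar{\Delta}\xi$ vanish identically, not just at a point. There is no real obstacle here; the result is genuinely a corollary, and the shortest route is simply to note that parallel $\Rightarrow$ $S(\xi)=0$ and $\bar{\Delta}\xi=0$ $\Rightarrow$ harmonic map $\Rightarrow$ biharmonic map by the previous corollary.
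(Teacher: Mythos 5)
Your proof is correct and follows essentially the same route the paper intends: parallel $\Rightarrow$ $\bar{\Delta}\xi=0$ and $S(\xi)=0$ $\Rightarrow$ harmonic map by \eqref{eq:13} $\Rightarrow$ biharmonic map by the preceding corollary. The additional direct verification against Theorem~\ref{thm:03} is sound but redundant.
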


\begin{definition}
Given a compact para-K\"{a}hler--Norden manifold $(M^{2m},\varphi,g)$ and its tangent bundle $(TM,g^{\varphi})$ endowed with the $\varphi$-Sasaki metric. For a vector field $\xi$ on $M$, the bienergy $\mathrm{E}_{2}(\xi)$ is defined as the bienergy of the corresponding map $\xi:(M^{2m},\varphi,g) \to (TM,g^{\varphi})$.
\end{definition}

\begin{definition}
Given a para-K\"{a}hler--Norden manifold $(M^{2m}, \varphi, g)$ and its tangent bundle $(TM,g^{\varphi})$ endowed with the $\varphi$-Sasaki metric. A vector field $\xi$ on $M$ is said to be a biharmonic vector field if the corresponding map $\xi :(M^{2m},\varphi,g) \to (TM,g^{\varphi})$ is a critical point of the bienergy functional $\mathrm{E}_{2}$, when variations are restricted to maps induced by vector fields.
\end{definition}

In the following theorem, we compute the first variation of the bienergy functional restricted to the space of vector fields.

\begin{theorem}\label{thm:04}
Given a compact oriented para-K\"{a}hler--Norden manifold $(M^{2m},\varphi,g)$ and its tangent bundle $(TM,g^{\varphi})$ endowed with the $\varphi$-Sasaki metric. Let $\xi$ be a vector field on $M$, and consider the bienergy functional $\mathrm{E}_{2}$ restricted to the space of vector fields. Then, for any smooth $1$-parameter variation $\Phi:M\times(-\varepsilon,\varepsilon) \to  TM$
of $\xi$ through vector fields, the following formula holds:
\begin{eqnarray*}
\frac{d}{d t}\mathrm{E}_{2}(\xi_{t})\Big|_{t=0}&=&\int_{M}g\big(\bar{\Delta}^{2}\xi-\mathrm{Tr}_{g}\big((\nabla_{\ast}\mathrm{R})(\ast,S(\xi))\xi+ \mathrm{R}(\ast,\nabla_{\ast}S(\xi))\xi\nonumber\\
&&+2\mathrm{R}(\ast,S(\xi))\nabla_{\ast}\xi\big),\varphi W\big)v_{g},
\end{eqnarray*}
where $\Phi(p,t)=\xi_{t}(p)\in T_{p}M$ for all $p\in M$ and $|t|<\varepsilon$ $(\varepsilon>0)$. Moreover, the vector field $W$ on $M$ is defined by
\begin{eqnarray*}
W(p)= \lim_{t\to 0}\frac{1}{t}\big(\xi_{t}(p)-\xi(p)\big)= \frac{d}{dt}\Phi_{p}(t)\Big|_{t=0},
\; p\in M,
\end{eqnarray*}
where $\Phi_{p}(t)=\xi_{t}(p)$ for $(p,t)\in M\times(-\varepsilon,\varepsilon)$.
\end{theorem}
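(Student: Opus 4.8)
The plan is to specialize the classical first variation formula for the bienergy to variations through vector fields and then extract the vertical component of the bitension field already computed in \eqref{eq:14}. Recall that for any smooth map $\phi:(M,g)\to(N,h)$ with $M$ compact and oriented, and any smooth variation $\phi_{t}$ with variation vector field $V=\frac{d}{dt}\phi_{t}\big|_{t=0}\in\Gamma(\phi^{-1}TN)$, Jiang's first variation formula gives
$$\frac{d}{dt}\mathrm{E}_{2}(\phi_{t})\Big|_{t=0}=-\int_{M}h\big(\tau_{2}(\phi),V\big)\,v_{g},$$
with no boundary term since $\partial M=\varnothing$. Applying this with $\phi=\xi$, $N=TM$ and $h=g^{\varphi}$, the proof reduces to identifying the variation field $V$ produced by a variation through vector fields and then pairing it against $\tau_{2}(\xi)$.

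Next I would observe that the variation field is purely vertical. Fix $p\in M$: because every $\xi_{t}$ is a vector field, the curve $t\mapsto\Phi(p,t)=\xi_{t}(p)$ lies entirely in the fibre $T_{p}M$, a linear subspace of $TM$, so its velocity at $t=0$ is a vertical tangent vector of $TM$ at $(p,\xi(p))$. Under the canonical identification $\mathcal{V}_{(p,v)}\cong T_{p}M$ this velocity is exactly $W(p)=\frac{d}{dt}\xi_{t}(p)\big|_{t=0}$, whence $V={}^{V}\!W$ along $\xi$. This is the vertical counterpart of \eqref{eq:06}, and it is precisely the reason the restriction ``through vector fields'' forces the variation field to be vertical.

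It then remains to pair and conclude. Write \eqref{eq:14} as $\tau_{2}(\xi)={}^{H}\!A+{}^{V}\!B$, where
$$B=-\bar{\Delta}^{2}\xi+\mathrm{Tr}_{g}\big((\nabla_{\ast}\mathrm{R})(\ast,S(\xi))\xi+\mathrm{R}(\ast,\nabla_{\ast}S(\xi))\xi+2\mathrm{R}(\ast,S(\xi))\nabla_{\ast}\xi\big)$$
is the vertical part. The three defining relations of $g^{\varphi}$ give $g^{\varphi}({}^{H}\!A,{}^{V}\!W)=0$ and $g^{\varphi}({}^{V}\!B,{}^{V}\!W)=g(B,\varphi W)$, so that $g^{\varphi}(\tau_{2}(\xi),V)=g(B,\varphi W)$. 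Substituting into the first variation formula and noting that $-B=\bar{\Delta}^{2}\xi-\mathrm{Tr}_{g}\big((\nabla_{\ast}\mathrm{R})(\ast,S(\xi))\xi+\mathrm{R}(\ast,\nabla_{\ast}S(\xi))\xi+2\mathrm{R}(\ast,S(\xi))\nabla_{\ast}\xi\big)$ produces the asserted identity.

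The genuinely delicate point is the second step: one must check carefully that a variation through vector fields yields a vertical variation field and that the canonical vertical identification sends it to $W$; granting this, the rest is bookkeeping with the definition of $g^{\varphi}$ and the already-established formula \eqref{eq:14}. It is also worth tracking the sign: the minus in Jiang's formula together with the minus in front of $\bar{\Delta}^{2}\xi$ inside $B$ is what yields the $+\bar{\Delta}^{2}\xi$ in the statement.
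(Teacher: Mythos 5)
Your proposal is correct and follows essentially the same route as the paper: Jiang's first variation formula, the identification $\mathcal{V}={}^{V}\!W\circ\xi$ of the variation field as a vertical lift (which the paper simply cites from Dragomir--Perrone, whereas you justify it directly via the linearity of the fibre), and the pairing against the vertical part of $\tau_{2}(\xi)$ from \eqref{eq:14} using $g^{\varphi}({}^{V}\!B,{}^{V}\!W)=g(B,\varphi W)$. The sign bookkeeping you describe matches the paper's computation exactly.
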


\begin{proof}
Let $\Phi:M\times(-\varepsilon,\varepsilon)\to TM$ be a smooth one-parameter variation of the vector field $\xi$, that is,
$$\Phi(p,t)=\xi_{t}(p)\in T_{p}M,
\quad (p,t)\in M\times(-\varepsilon,\varepsilon),$$
with $\Phi(p,0)=\xi_{0}(p)=\xi(p)$. 

By the general theory of biharmonic maps (see, for instance,~\cite{Jia}), we have
\begin{eqnarray}\label{eq:19}
\frac{d}{dt}\mathrm{E}_{2}(\xi_{t})\Big|_{t=0}=-\int_{M}g^{\varphi}(\tau_{2}(\xi),\mathcal{V})\,v_{g},
\end{eqnarray}
where $\mathcal{V}$ denotes the infinitesimal variation induced by $\Phi$, namely,
$$\mathcal{V}(p)
=d_{(p,0)}\Phi\!\big(0,\frac{d}{dt}\big)\Big|_{t=0}
=d\Phi_{p}\!\big(\frac{d}{dt}\big)\Big|_{t=0}
=\frac{d}{dt}\xi_{t}(p)\Big|_{t=0}
\in T_{(p,\xi(p))}TM.$$

It is well known (see~\cite[p.~58]{D.P}) that
\begin{eqnarray}\label{eq:20}
\mathcal{V}={}^{V}\!W\circ \xi,
\end{eqnarray}
where ${}^{V}\!W$ denotes the vertical lift of $W$.

Finally, taking into account formulas~\eqref{eq:14}, \eqref{eq:19}, and~\eqref{eq:20}, we obtain
\begin{eqnarray*}
\frac{d}{dt}\mathrm{E}_{2}(\xi_{t})\Big|_{t=0}&=&-\int_{M}g^{\varphi}(\tau_{2}(\xi),{}^{V}\!W)\,v_{g}\\
&=&\int_{M}g\big(\bar{\Delta}^{2}\xi-\mathrm{Tr}_{g}\big((\nabla_{\ast}R)(\ast,S(\xi))\xi
+\mathrm{R}(\ast,\nabla_{\ast}S(\xi))\xi\\
&&+2\mathrm{R}(\ast,S(\xi))\nabla_{\ast}\xi\big),\varphi W\big)\,v_{g}.
\end{eqnarray*}
\end{proof}

\begin{remark}
Theorem~\ref{thm:04} remains valid when $(M^{2m},\varphi,g)$ is a non-compact para-K\"{a}hler--Norden manifold. Indeed, in the non-compact case, one may consider an open subset $D\subset M$ with compact closure and choose a variation vector field $W$ with compact support contained in $D$. Under these assumptions, Theorem~\ref{thm:04} holds in the following form:	
\begin{eqnarray*}
\frac{d}{d t}\mathrm{E}_{2}(\xi_{t})\Big|_{t=0}&=&\int_{D}g\big(\bar{\Delta}^{2}\xi-\mathrm{Tr}_{g}\big((\nabla_{\ast}R)(\ast,S(\xi))\xi+ \mathrm{R}(\ast,\nabla_{\ast}S(\xi))\xi\nonumber\\
&&+2\mathrm{R}(\ast,S(\xi))\nabla_{\ast}\xi\big),\varphi W\big)v_{g},
\end{eqnarray*}	
\end{remark}

\begin{corollary}\label{cor:01}
Given a para-K\"{a}hler--Norden manifold $(M^{2m}, \varphi, g)$ and its tangent bundle $(TM,g^{\varphi})$ endowed with the $\varphi$-Sasaki metric. A vector field $\xi$ on $M$ is a biharmonic vector field if and only if	
\begin{eqnarray*}
\bar{\Delta}^{2}\xi=\mathrm{Tr}_{g}\big((\nabla_{\ast}R)(\ast,S(\xi))\xi+ \mathrm{R}(\ast,\nabla_{\ast}S(\xi))\xi+2\mathrm{R}(\ast,S(\xi))\nabla_{\ast}\xi\big). 
\end{eqnarray*}
\end{corollary}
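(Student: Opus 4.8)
The plan is to deduce Corollary~\ref{cor:01} directly from the first variation formula established in Theorem~\ref{thm:04}, combined with the fundamental lemma of the calculus of variations. By definition, a vector field $\xi$ on $M$ is a biharmonic vector field if and only if $\frac{d}{dt}\mathrm{E}_{2}(\xi_{t})\big|_{t=0}=0$ for every smooth one-parameter variation $\Phi:M\times(-\varepsilon,\varepsilon)\to TM$ of $\xi$ through vector fields.

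First I would observe that the variation vector field $W$ occurring in Theorem~\ref{thm:04} is essentially arbitrary: given any smooth vector field $W$ on $M$, the family $\xi_{t}:=\xi+tW$ is a smooth one-parameter variation of $\xi$ through vector fields with $\xi_{0}=\xi$ and $\frac{d}{dt}\xi_{t}\big|_{t=0}=W$. Hence, writing
\[
A(\xi):=\bar{\Delta}^{2}\xi-\mathrm{Tr}_{g}\big((\nabla_{\ast}R)(\ast,S(\xi))\xi+\mathrm{R}(\ast,\nabla_{\ast}S(\xi))\xi+2\mathrm{R}(\ast,S(\xi))\nabla_{\ast}\xi\big)
\]
for the vector field appearing inside the metric term of Theorem~\ref{thm:04}, the field $\xi$ is a biharmonic vector field if and only if
\[
\int_{M} g\big(A(\xi),\varphi W\big)\,v_{g}=0
\]
for every such $W$. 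In the non-compact case one argues identically, with $W$ ranging over vector fields of compact support contained in a relatively compact open set $D$ and using the form of the first variation given in the Remark following Theorem~\ref{thm:04}.

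Second, I would eliminate $\varphi$ from the pairing. Since the para-complex structure satisfies $\varphi^{2}=I$, the endomorphism $\varphi$ is a bundle automorphism of $TM$; therefore, as $W$ ranges over all (respectively, all compactly supported) vector fields on $M$, so does $Z:=\varphi W$. Consequently the vanishing condition above is equivalent to $\int_{M}g(A(\xi),Z)\,v_{g}=0$ for all (compactly supported) vector fields $Z$ on $M$. Finally, I would invoke the fundamental lemma of the calculus of variations: since $g$ is nondegenerate, the condition $\int_{M}g(A(\xi),Z)\,v_{g}=0$ for all test vector fields $Z$ forces $A(\xi)\equiv 0$ pointwise (apply the usual bump-function argument to the components of $A(\xi)$ in a local frame). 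This is precisely the asserted equation $\bar{\Delta}^{2}\xi=\mathrm{Tr}_{g}\big((\nabla_{\ast}R)(\ast,S(\xi))\xi+\mathrm{R}(\ast,\nabla_{\ast}S(\xi))\xi+2\mathrm{R}(\ast,S(\xi))\nabla_{\ast}\xi\big)$, and the converse implication is immediate from the same first variation formula. There is no genuine obstacle here beyond bookkeeping; the only step meriting a word of care is the reduction from "for all $W$" to "for all $Z$", which rests on the invertibility of $\varphi$ guaranteed by $\varphi^{2}=I$.
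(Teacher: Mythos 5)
Your proposal is correct and follows exactly the route the paper intends: Corollary~\ref{cor:01} is stated as an immediate consequence of the first variation formula in Theorem~\ref{thm:04} (and its non-compact version in the subsequent remark), and your argument---arbitrariness of the variation field $W$, invertibility of $\varphi$ from $\varphi^{2}=I$ to pass from $\varphi W$ to an arbitrary test field, and the fundamental lemma via nondegeneracy of $g$---is precisely the standard deduction being invoked. No gaps; the paper simply omits these routine details.
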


From Theorem~\ref{thm:03} and Corollary~\ref{cor:01}, we obtain the following result.

\begin{corollary}
Given a para-K\"{a}hler--Norden manifold $(M^{2m}, \varphi, g)$ and its tangent bundle $(TM,g^{\varphi})$ endowed with the $\varphi$-Sasaki metric. A vector field $\xi$ on $M$ is a biharmonic map if and only if $\xi$ is biharmonic vector field and satisfies
\begin{align*}
\bar{\Delta}S(\xi)&+\mathrm{R}(\varphi \xi,\bar{\Delta}\xi)S(\xi)-\mathrm{Tr}_{g}\big(\mathrm{R}(S(\xi),\ast)\ast+(\nabla_{S(\xi)}R)(\varphi \xi,\nabla_{\ast}\xi)\ast\\
&+\mathrm{R}(\varphi \xi,\nabla_{\ast}\xi)\nabla_{\ast}S(\xi)-\mathrm{R}(\varphi \xi,\mathrm{R}(\ast,S(\xi)) \xi)\ast-\mathrm{R}(\varphi \xi,\nabla_{\ast}\bar{\Delta}\xi)\ast\\
&-\mathrm{R}(\varphi\bar{\Delta}\xi,\nabla_{\ast}\xi)\ast\big)=0,
\end{align*}
\end{corollary}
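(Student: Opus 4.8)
The plan is to derive the statement as a purely logical consequence of Theorem~\ref{thm:03} and Corollary~\ref{cor:01}, so that no new computation is required. First I would recall that Theorem~\ref{thm:03} characterizes biharmonicity of the map $\xi:(M^{2m},\varphi,g)\to(TM,g^{\varphi})$ by the simultaneous vanishing of two expressions, corresponding to the ${}^{H}$-part and the ${}^{V}$-part of $\tau_{2}(\xi)$ in the decomposition~\eqref{eq:14}: the horizontal equation displayed in the present corollary, and the vertical equation
\[
\bar{\Delta}^{2}\xi-\mathrm{Tr}_{g}\big((\nabla_{\ast}R)(\ast,S(\xi))\xi+ \mathrm{R}(\ast,\nabla_{\ast}S(\xi))\xi+2\mathrm{R}(\ast,S(\xi))\nabla_{\ast}\xi\big)=0.
\]
Next I would invoke Corollary~\ref{cor:01}, which asserts that $\xi$ is a biharmonic vector field exactly when this vertical equation holds. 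Substituting the second characterization into the first yields the claim: $\xi$ is a biharmonic map if and only if $\xi$ is a biharmonic vector field and, in addition, the horizontal equation is satisfied.

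The one point worth making explicit — and essentially the only place any argument is needed — is why the biharmonic-vector-field condition detects only the vertical half of $\tau_{2}(\xi)$. By~\eqref{eq:20}, a variation of $\xi$ through vector fields has infinitesimal variation field $\mathcal{V}={}^{V}\!W\circ\xi$, which is purely vertical; hence the first-variation formula of Theorem~\ref{thm:04}, $\frac{d}{dt}\mathrm{E}_{2}(\xi_{t})|_{t=0}=-\int_{M}g^{\varphi}(\tau_{2}(\xi),{}^{V}\!W)\,v_{g}$, pairs $\tau_{2}(\xi)$ only against vertical lifts. Since $g^{\varphi}({}^{V}\!W,{}^{V}\!Z)=g(W,\varphi Z)$ is non-degenerate (because $\varphi^{2}=I$), the critical-point condition forces the ${}^{V}$-component of $\tau_{2}(\xi)$ to vanish while imposing nothing on its ${}^{H}$-component; the latter is precisely the extra requirement that distinguishes a genuine biharmonic map from a biharmonic vector field.

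There is essentially no obstacle here: the entire analytic content has already been discharged into Theorem~\ref{thm:03} and Corollary~\ref{cor:01}, and the proof reduces to matching the two lists of conditions. If a fully self-contained write-up were preferred, one could fold the short derivation of Corollary~\ref{cor:01} from Theorem~\ref{thm:04} into the present argument, but this contributes nothing conceptually new.
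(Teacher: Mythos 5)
Your proposal is correct and follows the same route as the paper, which states this corollary as an immediate consequence of Theorem~\ref{thm:03} and Corollary~\ref{cor:01} without further argument: the vertical condition of Theorem~\ref{thm:03} is exactly the biharmonic-vector-field equation of Corollary~\ref{cor:01}, and the horizontal condition is the displayed extra equation. Your added remark on why the vector-field variations only see the vertical component (non-degeneracy of $g^{\varphi}$ on vertical lifts, since $\varphi^{2}=I$) is a sound and welcome clarification, but it does not change the argument.
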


\begin{theorem}\label{thm:05}
Given a flat para-K\"{a}hler--Norden manifold $(M^{2m}, \varphi, g)$ and its tangent bundle $(TM,g^{\varphi})$ endowed with the $\varphi$-Sasaki metric. For a vector field $\xi$ on $M$, the following statements are equivalent:
\begin{itemize}
\item[(i)] $\xi$ is a biharmonic map on $M$,
\item[(ii)] $\xi$ is a biharmonic vector field on $M$,
\item[(iii)] $\bar{\Delta}^{2}\xi=0$.
\end{itemize}
\end{theorem}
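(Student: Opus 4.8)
The plan is to specialize the general characterizations obtained in Theorem~\ref{thm:03} and Corollary~\ref{cor:01} to the flat case, in which the curvature tensor $\mathrm{R}$ of $(M^{2m},\varphi,g)$ vanishes identically. The first step is to record the immediate consequences of flatness: since $\mathrm{R}\equiv 0$ we also have $\nabla\mathrm{R}\equiv 0$, and by~\eqref{eq:10} the vector field $S(\xi)=\mathrm{Tr}_{g}\,\mathrm{R}(\varphi\xi,\nabla_{\ast}\xi)\ast$ vanishes for every $\xi$ on $M$; hence also $\bar{\Delta}S(\xi)=0$ and $\nabla_{\ast}S(\xi)=0$. I would then note that every summand appearing in the two displayed conditions of Theorem~\ref{thm:03} carries at least one factor of $\mathrm{R}$, of $\nabla\mathrm{R}$, or of $S(\xi)$, and therefore vanishes under the flatness hypothesis.

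Next I would observe that, as a result, the first condition of Theorem~\ref{thm:03} is satisfied automatically (it reduces to $0=0$), so that the biharmonicity of the map $\xi:(M^{2m},\varphi,g)\to(TM,g^{\varphi})$ is equivalent to its second condition, which collapses to $\bar{\Delta}^{2}\xi=0$. This gives (i)$\Leftrightarrow$(iii). For (ii)$\Leftrightarrow$(iii) I would invoke Corollary~\ref{cor:01}: $\xi$ is a biharmonic vector field if and only if $\bar{\Delta}^{2}\xi=\mathrm{Tr}_{g}\big((\nabla_{\ast}\mathrm{R})(\ast,S(\xi))\xi+\mathrm{R}(\ast,\nabla_{\ast}S(\xi))\xi+2\mathrm{R}(\ast,S(\xi))\nabla_{\ast}\xi\big)$, and by the same bookkeeping the right-hand side is $0$ in the flat case; hence $\xi$ is a biharmonic vector field if and only if $\bar{\Delta}^{2}\xi=0$. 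Chaining the two equivalences yields (i)$\Leftrightarrow$(ii)$\Leftrightarrow$(iii).

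There is essentially no analytic obstacle here: the argument is a straightforward specialization of the earlier variational formulas. The only point requiring a moment's care is to verify that every term occurring in the biharmonic-map equations of Theorem~\ref{thm:03} and in the biharmonic-vector-field equation of Corollary~\ref{cor:01} genuinely involves the curvature, either directly through $\mathrm{R}$ or $\nabla\mathrm{R}$, or indirectly through $S(\xi)$, so that nothing survives the hypothesis $\mathrm{R}\equiv 0$ except the pure rough-bilaplacian term $\bar{\Delta}^{2}\xi$; once this is checked term by term, the equivalences follow immediately.
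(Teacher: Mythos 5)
Your proposal is correct and follows exactly the route the paper intends: the paper states Theorem~\ref{thm:05} without a written proof, placing it immediately after Theorem~\ref{thm:03} and Corollary~\ref{cor:01}, and the implicit argument is precisely your specialization to $\mathrm{R}\equiv 0$, which forces $S(\xi)=0$ and kills every term except $\bar{\Delta}^{2}\xi$. Your term-by-term check that each summand carries a factor of $\mathrm{R}$, $\nabla\mathrm{R}$, or $S(\xi)$ is the whole content of the proof, and it is carried out correctly.
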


\begin{corollary}\label{cor:02}
Given a para-K\"{a}hler--Norden manifold $(M^{2m}, \varphi, g)$ and its tangent bundle $(TM,g^{\varphi})$ endowed with the $\varphi$-Sasaki metric. Then every parallel vector field on $M$ is a biharmonic vector field.
\end{corollary}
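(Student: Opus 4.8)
The plan is to reduce the statement to the characterization of biharmonic vector fields given in Corollary~\ref{cor:01}, and then to observe that parallelism forces every term appearing there to vanish. Concretely, let $\xi$ be a parallel vector field on $M$, so that $\nabla\xi=0$, and hence $\nabla_{X}\xi=0$ for every vector field $X$ on $M$. First I would record the immediate consequences at the level of the operators $S$ and $\bar\Delta$ introduced in~\eqref{eq:09} and~\eqref{eq:10}: from $\nabla_{*}\xi=0$ we get $S(\xi)=\mathrm{Tr}_{g}\,\mathrm{R}(\varphi\xi,\nabla_{*}\xi)*=0$, and from $\nabla^{2}\xi=0$ we get $\bar\Delta\xi=-\mathrm{Tr}_{g}\nabla^{2}\xi=0$, whence also $\bar\Delta^{2}\xi=\bar\Delta(\bar\Delta\xi)=0$.

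Next I would substitute these vanishings into the biharmonic vector field equation of Corollary~\ref{cor:01},
\begin{eqnarray*}
\bar{\Delta}^{2}\xi=\mathrm{Tr}_{g}\big((\nabla_{*}R)(*,S(\xi))\xi+ \mathrm{R}(*,\nabla_{*}S(\xi))\xi+2\mathrm{R}(*,S(\xi))\nabla_{*}\xi\big).
\end{eqnarray*}
On the left-hand side $\bar\Delta^{2}\xi=0$ as computed above. On the right-hand side, since $S(\xi)=0$ one has $\nabla_{*}S(\xi)=0$ as well, so each of the three terms $(\nabla_{*}R)(*,S(\xi))\xi$, $\mathrm{R}(*,\nabla_{*}S(\xi))\xi$ and $2\mathrm{R}(*,S(\xi))\nabla_{*}\xi$ vanishes identically; in fact the last term also carries the factor $\nabla_{*}\xi=0$. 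Therefore both sides of the identity are zero, the equation is satisfied, and Corollary~\ref{cor:01} yields that $\xi$ is a biharmonic vector field.

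There is essentially no obstacle here: the argument is a direct verification, and the only point requiring a line of care is making sure that $\nabla\xi=0$ is invoked both through $S(\xi)=0$ and through $\bar\Delta\xi=0$, so that the left-hand side of the characterizing equation vanishes for the same reason as the right-hand side. One may also note that this corollary is consistent with—and a special case of—Theorem~\ref{thm:05}(iii) when the ambient manifold is additionally flat, but the present statement needs no curvature hypothesis on $M$ beyond the para-K\"ahler--Norden assumption already in force.
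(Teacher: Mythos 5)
Your proof is correct and is exactly the argument the paper intends: the corollary is stated without proof as an immediate consequence of Corollary~\ref{cor:01}, and your verification that $\nabla\xi=0$ forces $S(\xi)=0$, $\bar{\Delta}\xi=0$, hence $\bar{\Delta}^{2}\xi=0$ and the vanishing of every term on the right-hand side, is the intended (and only natural) route.
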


\begin{theorem}\label{thm:06}
Given a compact oriented para-K\"{a}hler--Norden manifold $(M^{2m}, \varphi, g)$ and its tangent bundle $(TM,g^{\varphi})$ endowed with the $\varphi$-Sasaki metric. Then every harmonic vector field on $M$ is biharmonic vector field.
\end{theorem}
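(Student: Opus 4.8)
The plan is to show that on a compact oriented base a harmonic vector field is automatically parallel, after which the conclusion follows from Corollary~\ref{cor:02}. Recall from the discussion around~\eqref{eq:13} that $\xi$ is a harmonic vector field precisely when $\bar{\Delta}\xi=0$, with $\bar{\Delta}$ the rough Laplacian of~\eqref{eq:09}.

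\emph{Step 1 (rigidity via Bochner).} Let $Z$ be the vector field determined by $g(Z,Y)=g(\nabla_{Y}\xi,\xi)$ for all $Y$. A direct computation with the Leibniz rule gives the (globally valid) identity
\begin{equation*}
g(\bar{\Delta}\xi,\xi)=|\nabla\xi|^{2}-\mathrm{div}\,Z .
\end{equation*}
Indeed, picking at a point $p$ an orthonormal frame $\{e_{i}\}_{i=\overline{1,2m}}$ with $\nabla_{e_{i}}e_{j}=0$ at $p$, one has $g(\nabla_{e_{i}}\nabla_{e_{i}}\xi,\xi)=e_{i}\big(g(\nabla_{e_{i}}\xi,\xi)\big)-|\nabla_{e_{i}}\xi|^{2}$, and summing over $i$ together with $\bar{\Delta}\xi=-\sum_{i}\nabla_{e_{i}}\nabla_{e_{i}}\xi$ at $p$ yields the claim at $p$, hence everywhere. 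Integrating over the compact oriented manifold $M$ and applying Stokes' theorem to discard $\int_{M}\mathrm{div}\,Z\,v_{g}$ gives
\begin{equation*}
\int_{M}g(\bar{\Delta}\xi,\xi)\,v_{g}=\int_{M}|\nabla\xi|^{2}\,v_{g}.
\end{equation*}
If $\xi$ is a harmonic vector field, the left-hand side is zero, and since $g$ is Riemannian we conclude $\nabla\xi=0$, i.e.\ $\xi$ is parallel.

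\emph{Step 2 (conclusion).} A parallel vector field is a biharmonic vector field by Corollary~\ref{cor:02}, which completes the argument. One may also verify this directly from Corollary~\ref{cor:01}: if $\nabla\xi=0$ then $\bar{\Delta}\xi=0$, hence $\bar{\Delta}^{2}\xi=0$, while $S(\xi)=\mathrm{Tr}_{g}\mathrm{R}(\varphi\xi,\nabla_{\ast}\xi)\ast=0$ by~\eqref{eq:10}; consequently every term on the right-hand side of the biharmonic-vector-field equation contains a factor $S(\xi)$, $\nabla_{\ast}S(\xi)$ or $\nabla_{\ast}\xi$, all of which vanish, so the equation reduces to $0=0$. (By~\eqref{eq:13} such a $\xi$ is then even a harmonic, hence a biharmonic, map.)

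\emph{Main difficulty.} There is no genuine analytic obstacle; the proof is one integration by parts. The only real content sits in Step~1, where compactness and orientability enter---through Stokes' theorem---to collapse the a priori weaker notion ``harmonic vector field'' into ``parallel vector field''; once that is done, biharmonicity is immediate from the corollaries already established. The sole point demanding a little care is the correct bookkeeping of the divergence term $\mathrm{div}\,Z$, which is routine.
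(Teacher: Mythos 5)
Your proof is correct and follows essentially the same route as the paper: the paper's proof simply cites Theorem~4.4 of~\cite{Zag26} (harmonic vector fields on a compact oriented base are parallel) together with Corollary~\ref{cor:02}, whereas you re-derive that cited rigidity step yourself via the standard Bochner-type integration by parts, and your computation of the identity $g(\bar{\Delta}\xi,\xi)=|\nabla\xi|^{2}-\mathrm{div}\,Z$ is accurate. The only difference is that your argument is self-contained where the paper defers to an external reference.
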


\begin{proof}
The proof follows directly from Theorem~4.4 in \cite{Zag26} and Corollary~\ref{cor:02}.
\end{proof}

\begin{remark}
In general, if $(M^{2m},\varphi,g)$ is a non-compact para-K\"{a}hler--Norden manifold, there may exist harmonic vector fields that are not biharmonic, and vice versa. See Example~\ref{exa:01},~\ref{exa:02} for an explicit illustration.

%In contrast, for Riemannian manifolds, the situation is more rigid. In the compact case, the notions of \emph{harmonic vector field} and \emph{parallel vector field} coincide, as do the notions of \emph{biharmonic vector field} and \emph{parallel vector field}. Consequently, on compact Riemannian manifolds, harmonic and biharmonic vector fields are equivalent concepts.
%
%This conclusion contradicts Remark~3.7 in~\cite{M.U}, where it is claimed that there exist biharmonic vector fields which are not harmonic, and conversely, leading the authors to assert that the notions of harmonic and biharmonic vector fields are independent.
%
%We point out that this assertion is incorrect. Indeed, on a compact Riemannian manifold, the concept of a harmonic vector field coincides with that of a harmonic map defined by a vector field, and similarly, a biharmonic vector field is equivalent to a biharmonic map induced by a vector field. Hence, in this setting, the notions of harmonic and biharmonic vector fields are not independent.
\end{remark}

\begin{example}\label{exa:01}
Let $\mathbb{R}^{2}$ be endowed with a para-K\"{a}hler--Norden structure $(\varphi, g)$ defined in polar coordinates by
$$g= dr^{2}+r^{2}d\theta^{2},\quad  
\varphi=\left( \begin{array}{ccc}
\sin 2\theta  & r\cos 2\theta\\
\dfrac{1}{r} \cos 2\theta &-\sin 2\theta  
\end{array} \right).$$

Let $\xi = f(r)e_{1}$ be a vector field such that $f$ is a smooth nonzero real-valued function depending only on the variable $r$. With respect to the orthonormal frame
\begin{eqnarray*}
e_{1}=\partial_{r}, \quad e_{2}=\dfrac{1}{r}\partial_{\theta}.
\end{eqnarray*}
we have, 
\begin{eqnarray*}
\varphi e_{1}=\sin 2\theta e_{1}+\cos 2\theta e_{2}, \quad \varphi e_{2}=\cos 2\theta e_{1}-\sin 2\theta e_{2}.
\end{eqnarray*}

The Levi-Civita connection satisfies
\begin{eqnarray}\label{eq:21} 
\nabla_{e_{1}}e_{1}= \nabla_{e_{1}}e_{2}=0, \quad \nabla_{e_{2}}e_{1}=\dfrac{1}{r}e_{2}, \quad \nabla_{e_{2}}e_{2}=-\dfrac{1}{r}e_{1},
\end{eqnarray}
and the curvature tensor vanishes identically, that is, $(\mathbb{R}^{2},\varphi, g)$ is a flat para-K\"{a}hler--Norden manifold, it follows that $S(\xi)=0$.

Using equations \eqref{eq:09}, \eqref{eq:11}, and \eqref{eq:21}, through simple computations, we arrive at:
\begin{eqnarray*}
\bar{\Delta}\xi=\dfrac{1}{r^{2}}(-r^{2}f^{\prime\prime}-rf^{\prime}+f)e_{1},
\end{eqnarray*}
and
\begin{eqnarray*}
\bar{\Delta}^{2}\xi=\dfrac{1}{r^{4}}(r^{4} f^{(4)} + 2 r^{3} f^{(3)} - 3 r^{2} f^{\prime\prime} + 3 r f^{\prime} - 3 f)e_{1}.
\end{eqnarray*}

The vector field $\xi = f(r)e_{1}$ is harmonic vector field if and only if  $\bar{\Delta}\xi=0$, which is equivalent to the second-order differential equation 
\begin{eqnarray*}
-r^{2}f^{\prime\prime}-rf^{\prime}+f=0,
\end{eqnarray*}

whose general solution is 
\begin{eqnarray*}
f(r)= \frac{c_{1}}{r}+c_{2}r,
\end{eqnarray*}
where $c_{1}$ and $c_{2}$ are real constants not both equal to zero. Since $S(\xi)=0$, from \eqref{eq:13}, the vector fields 
$$\xi = (\frac{c_{1}}{r}+c_{2}r)e_{1}$$ 
are also harmonic maps.

From Theorem~\ref{thm:05}, we find that $\xi = f(r)e_{1}$ is a biharmonic vector field if and only if $\bar{\Delta}^{2} \xi = 0$, which is equivalent to the fourth-order differential equation
\begin{equation*}
r^{4} f^{(4)} + 2 r^{3} f^{(3)} - 3 r^{2} f^{\prime\prime} + 3 r f^{\prime} - 3 f = 0,
\end{equation*}

whose general solution is
\begin{eqnarray*}
f(r)= \frac{c_{1}}{r}+c_{2} r+c_{3} r\ln r+c_{4} r^{3},
\end{eqnarray*}
where $c_{1}, c_{2}, c_{3}$ and $c_{4}$ are real constants, not all zero.

By Theorem~\ref{thm:05}, the vector fields
$$\xi = \left(\frac{c_{1}}{r} + c_{2} r + c_{3} r \ln r + c_{4} r^{3}\right) e_{1}$$
are also biharmonic maps.

Note that the vector fields
$$\xi_{1} = \left(\frac{c_{1}}{r} + c_{2} r\right) e_{1}$$
are both harmonic and biharmonic vector fields, and consequently harmonic and biharmonic maps.

On the one hand, the vector fields
$$\xi_{2} = \left(c_{3} r \ln r + c_{4} r^{3}\right) e_{1}$$
are biharmonic but not harmonic vector fields, that is, they are proper biharmonic vector fields; moreover, they define proper biharmonic maps.

On the other hand, the vector fields
$$\xi = \left(\frac{c_{1}}{r} + c_{2} r + c_{3} r \ln r + c_{4} r^{3}\right) e_{1}$$
are biharmonic but non-parallel, since
$$\nabla_{e_{1}} \xi =
\left(-\frac{c_{1}}{r^{2}} + c_{2} + c_{3}(\ln r + 1) + 3 c_{4} r^{2}\right) e_{1}\neq 0.$$

In contrast, the vector field
$$\xi_{3} = \sin\theta\, e_{1} + \cos\theta\, e_{2}$$
is parallel, since $\nabla_{e_{1}} \xi_{3} = \nabla_{e_{2}} \xi_{3} = 0$, and hence is biharmonic. 
\end{example}

\begin{example}\label{exa:02}
Given a para-K\"{a}hler-Norden manifold $\left(\mathbb{R}^{4},g,\varphi\right)$ such that
$$g= dx^{2}+e^{-2x}dy^{2}+dz^{2}+e^{-2z}dt^{2},\quad \varphi=diag(1, 1, -1, -1).$$
With respect to the orthonormal frame
\begin{eqnarray*}
e_{1}=\partial_{x}, \; e_{2}=e^{x}\partial_{y}, \; e_{3}=\partial_{z}, \; e_{4}=e^{z}\partial_{t}.
\end{eqnarray*}
we have, 
\begin{eqnarray}\label{eq:22} 
\varphi e_{1}=e_{1}, \; \varphi e_{2}=e_{2}, \; \varphi e_{3}=-e_{3}, \  \varphi e_{4}=-e_{4}.
\end{eqnarray} 
The nonzero covariant derivatives \(\nabla_{e_i} e_j\), \(i,j = 1,\dots,4\), are given by
\begin{eqnarray}\label{eq:23}
\nabla_{e_{2}}e_{1}=-e_{2}, \; \nabla_{e_{2}}e_{2}=e_{1},\; \nabla_{e_{4}}e_{3}=-e_{4},\; \nabla_{e_{4}}e_{4}=e_{3}.
\end{eqnarray} 
The nonzero curvature terms \(\mathrm{R}(e_i, e_j)e_k\), $i,j,k = 1,\dots,4$, are
\begin{align}\label{eq:24}
\mathrm{R}(e_{1}, e_{2})e_{1}=e_{2},\;& \mathrm{R}(e_{1}, e_{2})e_{2}=-e_{1},\; \mathrm{R}(e_{3}, e_{4})e_{3}=e_{4}\nonumber\\
 &\mathrm{R}(e_{3}, e_{4})e_{4}=-e_{3}.
\end{align}

Let's assume the vector field $\xi = f(x)\,e_{1}$, such that $f$ is a smooth, nonzero real-valued depending only on the variable $x$. Using equations~\eqref{eq:09}, \eqref{eq:10}, \eqref{eq:22}, \eqref{eq:23}, and \eqref{eq:24} through simple computations, shows that
\begin{eqnarray}\label{eq:25}
\bar{\Delta} e_{1}= S(e_{1}) =e_{1}.
\end{eqnarray}
Combining formulas~\eqref{eq:11},~\eqref{eq:12} and~\eqref{eq:25}, we obtain
\begin{eqnarray*}
S(\xi)=f^{2}e_{1},
\end{eqnarray*}
\begin{eqnarray*}
\bar{\Delta}\xi=(-f^{\prime\prime}+f^{\prime}+f)e_{1},
\end{eqnarray*}
and
\begin{eqnarray*}
\bar{\Delta}^{2}\xi=(f^{(4)}-2f^{(3)}-f^{\prime\prime}+2f^{\prime}+f)e_{1}.
\end{eqnarray*}
Moreover, the nonzero terms $\nabla_{e_i}\xi$ and $\nabla_{e_i}S(\xi)$, $i=1,\dots,4$, are given by
\begin{eqnarray*}
\nabla_{e_{1}}\xi=f^{\prime}e_{1}, \; \nabla_{e_{2}}\xi=-fe_{2},\; \nabla_{e_{1}}S(\xi)=2f^{\prime}fe_{1},\; \nabla_{e_{2}}S(\xi)=-f^{2}e_{2}.
\end{eqnarray*}
Using these expressions, we obtain
\begin{eqnarray*}
\mathrm{Tr}_{g}\big((\nabla_{\ast}R)(\ast,S(\xi))\xi+ \mathrm{R}(\ast,\nabla_{\ast}S(\xi))\xi+2\mathrm{R}(\ast,S(\xi))\nabla_{\ast}\xi\big)=2f^{3}e_{1}.
\end{eqnarray*}

The vector field $\xi = f(x)e_{1}$ is harmonic vector field if and only if  $\bar{\Delta}\xi=0$, which is equivalent to 
\begin{eqnarray*}
-f^{\prime\prime}+f^{\prime}+f=0,
\end{eqnarray*}
whose general solution is 
\begin{eqnarray*}
f(x)= c_{1} e^{\frac{1-\sqrt{5}}{2}x}+c_{2} e^{\frac{1+\sqrt{5}}{2}x},
\end{eqnarray*}
where $c_{1},c_{2}\in\mathbb{R}$ are not both zero. Since $S(\xi)\neq0$, it follows from \eqref{eq:13} that the vector fields $$\xi = (c_{1} e^{\frac{1-\sqrt{5}}{2}x}+c_{2} e^{\frac{1+\sqrt{5}}{2}x})e_{1}$$ 
are not harmonic maps.

Using Corollary \ref{cor:01}, we find that $\xi = f(x)e_{1}$ is biharmonic vector field if and only if $f$ satisfies the following fourth-order nonlinear differential equation: 
\begin{eqnarray}\label{eq:26}
f^{(4)}-2f^{(3)}-f^{\prime\prime}+2f^{\prime}+f=2f^{3}.
\end{eqnarray} 
By the Cauchy–Lipschitz (Picard--Lindel\"of) theorem, for any initial data 
$$(f(x_{0}), f^{\prime}(x_{0}), f^{\prime\prime}(x_{0}), f^{\prime\prime\prime}(x_{0}))\in \mathbb{R}^{4},$$
there exists a unique local solution of~\eqref{eq:26} defined in a neighborhood of any point $x_{0}$.
Furthermore,  one can explicitly construct global bounded solutions. For instance, using direct computations (e.g., with \textit{Mathematica}), we find the solutions
$$f(x)=\pm\sqrt{\frac{3}{8}}\,\text{sech}\left(\frac{x}{\sqrt2}\right).$$
Consequently, the vector fields
$$\xi_{1}=\pm\sqrt{\frac{3}{8}}\,\text{sech}\left(\frac{x}{\sqrt2}\right)e_{1},$$
are biharmonic but not harmonic vector fields, they are, proper biharmonic vector fields.

It is clear that the vector fields 
$$\xi=(c_{1} e^{\frac{1-\sqrt{5}}{2}x}+c_{2} e^{\frac{1+\sqrt{5}}{2}x})e_{1}$$ 
are harmonic vector field but non biharmonic vector field.
\end{example}

\section{Interpolating sesqui-harmonicity of vector field $\xi:(M^{2m},\varphi,g)\to (TM,g^{\varphi})$}\label{sec:05}
\begin{theorem}
Given a para-K\"{a}hler--Norden manifold $(M^{2m}, \varphi, g)$ and its tangent bundle $(TM,g^{\varphi})$ endowed with the $\varphi$-Sasaki metric. For a vector field $\xi$ on $M$. Then the interpolating sesqui-tension field $\tau_{\delta_{1},\delta_{2}}(\xi)$ associated with $X$ is given by
\begin{eqnarray}\label{eq:27}
\tau_{\delta_{1},\delta_{2}}(\xi)&=&{}^{H}\!\big[\delta_{1} S(\xi)+\delta_{2}\bar{\Delta}S(\xi)+\delta_{2}\mathrm{R}(\varphi \xi,\bar{\Delta}\xi)S(\xi)-\delta_{2}\mathrm{Tr}_{g}\big(\mathrm{R}(S(\xi),\ast)\ast\nonumber\\
&&+(\nabla_{S(\xi)}R)(\varphi \xi,\nabla_{\ast}\xi)\ast+\mathrm{R}(\varphi \xi,\nabla_{\ast}\xi)\nabla_{\ast}S(\xi)\nonumber\\
&&-\mathrm{R}(\varphi \xi,\mathrm{R}(\ast,S(\xi)) \xi)\ast-\mathrm{R}(\varphi \xi,\nabla_{\ast}\bar{\Delta}\xi)\ast-\mathrm{R}(\varphi\bar{\Delta}\xi,\nabla_{\ast}\xi)\ast\big)\big]\nonumber\\
&&+{}^{V}\!\big[-\delta_{1}\bar{\Delta}\xi-\delta_{2}\bar{\Delta}^{2}\xi+\delta_{2}\mathrm{Tr}_{g}\big((\nabla_{\ast}R)(\ast,S(\xi))\xi\nonumber\\
&&+ \mathrm{R}(\ast,\nabla_{\ast}S(\xi))\xi+2\mathrm{R}(\ast,S(\xi))\nabla_{\ast}\xi\big)\big].  
\end{eqnarray}
\end{theorem}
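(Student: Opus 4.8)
The plan is to obtain \eqref{eq:27} directly from the defining formula for the interpolating sesqui-tension field together with the two expressions already derived in this paper. By \eqref{eq:05}, for the map $\xi\colon(M^{2m},\varphi,g)\to(TM,g^{\varphi})$ one has
$$\tau_{\delta_{1},\delta_{2}}(\xi)=\delta_{1}\,\tau(\xi)+\delta_{2}\,\tau_{2}(\xi),$$
so it suffices to substitute the tension field and the bitension field and regroup the horizontal and vertical parts.

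First I would insert the tension field from \eqref{eq:08}, namely $\tau(\xi)={}^{H}\!S(\xi)-{}^{V}\!\bar{\Delta}\xi$. This contributes the single term $\delta_{1}\,{}^{H}\!S(\xi)$ to the horizontal part of $\tau_{\delta_{1},\delta_{2}}(\xi)$ and the single term $-\delta_{1}\,{}^{V}\!\bar{\Delta}\xi$ to its vertical part. Next I would take the bitension field $\tau_{2}(\xi)$ from \eqref{eq:14}, multiply it by $\delta_{2}$, and keep it split into its horizontal bracket $\delta_{2}\,{}^{H}\!\big[\bar{\Delta}S(\xi)+\mathrm{R}(\varphi\xi,\bar{\Delta}\xi)S(\xi)-\mathrm{Tr}_{g}(\cdots)\big]$ and its vertical bracket $\delta_{2}\,{}^{V}\!\big[-\bar{\Delta}^{2}\xi+\mathrm{Tr}_{g}(\cdots)\big]$, where the trace terms are exactly those displayed in \eqref{eq:14}.

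Finally, since the horizontal and vertical lifts are $\mathbb{R}$-linear operations and the decomposition $T_{(p,v)}TM=\mathcal{H}_{(p,v)}\oplus\mathcal{V}_{(p,v)}$ is a direct sum, I would add the two contributions componentwise. Collecting all horizontal terms produces the bracket ${}^{H}\!\big[\delta_{1}S(\xi)+\delta_{2}\bar{\Delta}S(\xi)+\delta_{2}\mathrm{R}(\varphi\xi,\bar{\Delta}\xi)S(\xi)-\delta_{2}\mathrm{Tr}_{g}(\cdots)\big]$, and collecting all vertical terms produces ${}^{V}\!\big[-\delta_{1}\bar{\Delta}\xi-\delta_{2}\bar{\Delta}^{2}\xi+\delta_{2}\mathrm{Tr}_{g}(\cdots)\big]$, which is precisely \eqref{eq:27}.

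There is essentially no obstacle here: the argument is purely formal once \eqref{eq:08} and \eqref{eq:14} are available. The only point requiring care is bookkeeping—keeping the minus sign on the vertical contribution of $\tau(\xi)$ and distributing $\delta_{2}$ over the several curvature-type trace terms consistently with \eqref{eq:14}. If one preferred a self-contained derivation not invoking \eqref{eq:14}, the harder route would be to recompute $\Delta^{\xi}\tau(\xi)$ and $-\mathrm{Tr}_{g}\widetilde{\mathrm{R}}(\tau(\xi),d\xi)d\xi$ from Theorems~\ref{thm:01} and~\ref{thm:02} as in the proof of \eqref{eq:14}; but since that theorem is already established, invoking it is the efficient path.
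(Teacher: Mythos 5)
Your argument is correct and is exactly the paper's own proof: the authors likewise obtain \eqref{eq:27} by substituting \eqref{eq:08} and \eqref{eq:14} into the definition \eqref{eq:05} and collecting horizontal and vertical components. Nothing further is needed.
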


\begin{proof}
Using formulas \eqref{eq:05}, \eqref{eq:08}, and \eqref{eq:14}, a direct computation yields the expression given in \eqref{eq:27}.
\end{proof}

\begin{theorem}\label{thm:07}
Given a para-K\"{a}hler--Norden manifold $(M^{2m}, \varphi, g)$ and its tangent bundle $(TM,g^{\varphi})$ endowed with the $\varphi$-Sasaki metric. For a vector field $\xi$ on $M$. Then $\xi$ is an interpolating sesqui-harmonic map if and only if the following conditions are satisfied: 
\begin{align*}
\delta_{1} S(\xi)&+\delta_{2}\bar{\Delta}S(\xi)+\delta_{2}\mathrm{R}(\varphi \xi,\bar{\Delta}\xi)S(\xi)-\delta_{2}\mathrm{Tr}_{g}\big(\mathrm{R}(S(\xi),\ast)\ast\nonumber\\
&+(\nabla_{S(\xi)}R)(\varphi \xi,\nabla_{\ast}\xi)\ast+\mathrm{R}(\varphi \xi,\nabla_{\ast}\xi)\nabla_{\ast}S(\xi)\nonumber\\
&-\mathrm{R}(\varphi \xi,\mathrm{R}(\ast,S(\xi)) \xi)\ast-\mathrm{R}(\varphi \xi,\nabla_{\ast}\bar{\Delta}\xi)\ast\nonumber\\
&-\mathrm{R}(\varphi\bar{\Delta}\xi,\nabla_{\ast}\xi)\ast\big)=0,
\end{align*}
and
\begin{align*}
\delta_{1}\bar{\Delta}\xi&+\delta_{2}\bar{\Delta}^{2}\xi-\delta_{2}\mathrm{Tr}_{g}\big((\nabla_{\ast}R)(\ast,S(\xi))\xi+ \mathrm{R}(\ast,\nabla_{\ast}S(\xi))\xi\nonumber\\
&+2\mathrm{R}(\ast,S(\xi))\nabla_{\ast}\xi\big)=0. 
\end{align*}
\end{theorem}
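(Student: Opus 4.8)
The plan is to read Theorem~\ref{thm:07} off directly from the formula \eqref{eq:27} for the interpolating sesqui-tension field, exactly as in the biharmonic case (Theorem~\ref{thm:03}). By the variational characterization \eqref{eq:05}, the map $\xi:(M^{2m},\varphi,g)\to(TM,g^{\varphi})$ is interpolating sesqui-harmonic if and only if $\tau_{\delta_{1},\delta_{2}}(\xi)=0$ at every point. So the first step is simply to invoke \eqref{eq:27}, which already presents $\tau_{\delta_{1},\delta_{2}}(\xi)$ as a sum ${}^{H}\!A+{}^{V}\!B$, where $A$ and $B$ are the bracketed vector-field expressions appearing there.

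The second step uses the direct sum decomposition $T_{(p,v)}TM=\mathcal{H}_{(p,v)}\oplus\mathcal{V}_{(p,v)}$ together with the fact that the horizontal and vertical lifts $W\mapsto{}^{H}\!W_{(p,v)}$ and $W\mapsto{}^{V}\!W_{(p,v)}$ are linear isomorphisms onto $\mathcal{H}_{(p,v)}$ and $\mathcal{V}_{(p,v)}$, respectively. Hence, at the point $(p,\xi(p))$, the equation ${}^{H}\!A+{}^{V}\!B=0$ holds if and only if $A=0$ and $B=0$ as vectors of $T_{p}M$. Letting $p$ range over $M$, this shows that $\tau_{\delta_{1},\delta_{2}}(\xi)=0$ is equivalent to the simultaneous vanishing of the horizontal coefficient of \eqref{eq:27} and of its vertical coefficient, which are precisely the two displayed equations in the statement.

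There is no genuine analytic obstacle here, since the whole content has already been packaged into \eqref{eq:27}; the proof is the standard principle that a horizontal vector plus a vertical vector is zero only if each summand vanishes. The only point requiring a little care is the bookkeeping: one must interpret each term of \eqref{eq:27}---in particular every occurrence of the bundle coordinate $v$---with $v$ replaced by $\xi(p)$, so that the resulting horizontal and vertical coefficients are bona fide equations between vector fields on $M$. Once this is done, separating \eqref{eq:27} into its horizontal and vertical parts and setting each part equal to zero yields the claim.
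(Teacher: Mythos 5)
Your proposal is correct and matches the paper's (implicit) argument: the paper treats Theorem~\ref{thm:07} as an immediate consequence of formula~\eqref{eq:27}, obtained by setting $\tau_{\delta_{1},\delta_{2}}(\xi)=0$ and separating horizontal and vertical components, exactly as you describe. Your added remark that ${}^{H}$ and ${}^{V}$ are injections onto the complementary summands of $T_{(p,\xi(p))}TM$ is the correct justification for why both coefficients must vanish separately.
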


As an immediate consequence of Theorem~\ref{thm:07}, we obtain the following result.

\begin{corollary}
Given a para-K\"{a}hler--Norden manifold $(M^{2m}, \varphi, g)$ and its tangent bundle $(TM,g^{\varphi})$ endowed with the $\varphi$-Sasaki metric. Then every parallel vector field on $M$ is an interpolating sesqui-harmonic map.
\end{corollary}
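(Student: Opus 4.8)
The plan is to deduce the statement directly from Theorem~\ref{thm:07} by showing that a parallel vector field makes both characterizing equations hold identically, for every choice of $\delta_1,\delta_2$.

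First I would record the consequences of parallelism. If $\nabla_W\xi=0$ for every vector field $W$ on $M$, then from \eqref{eq:09} the rough Laplacian $\bar\Delta\xi=-\mathrm{Tr}_g\nabla^2\xi$ vanishes, and from \eqref{eq:10} the field $S(\xi)=\mathrm{Tr}_g\mathrm{R}(\varphi\xi,\nabla_\ast\xi)\ast$ vanishes as well, since each summand carries the factor $\nabla_{e_i}\xi=0$. Consequently $\bar\Delta S(\xi)=\bar\Delta(0)=0$ and $\bar\Delta^2\xi=\bar\Delta(\bar\Delta\xi)=\bar\Delta(0)=0$, and likewise $\nabla_\ast\bar\Delta\xi=0$ and $\nabla_\ast S(\xi)=0$.

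Next I would inspect the two conditions in Theorem~\ref{thm:07} term by term. In the first equation every term contains, as a multiplicative factor, at least one of $S(\xi)$, $\bar\Delta\xi$, $\nabla_\ast\xi$, or $\nabla_\ast\bar\Delta\xi$ — for instance $\delta_1 S(\xi)$, $\delta_2\bar\Delta S(\xi)$, $\delta_2\mathrm{R}(\varphi\xi,\bar\Delta\xi)S(\xi)$, $\delta_2\mathrm{R}(S(\xi),\ast)\ast$, $(\nabla_{S(\xi)}R)(\varphi\xi,\nabla_\ast\xi)\ast$, and so on — and hence vanishes. The same holds for the second equation: each of $\delta_1\bar\Delta\xi$, $\delta_2\bar\Delta^2\xi$, $(\nabla_\ast R)(\ast,S(\xi))\xi$, $\mathrm{R}(\ast,\nabla_\ast S(\xi))\xi$, and $\mathrm{R}(\ast,S(\xi))\nabla_\ast\xi$ is zero. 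Therefore both conditions are satisfied identically, and Theorem~\ref{thm:07} yields that $\xi$ is an interpolating sesqui-harmonic map for all $\delta_1,\delta_2\in\mathbb{R}$.

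There is essentially no serious obstacle; the only point requiring minor care is the bookkeeping that confirms that genuinely \emph{every} summand appearing in the two expressions of Theorem~\ref{thm:07} is annihilated by the vanishing of $\nabla\xi$ (and hence of $S(\xi)$, $\bar\Delta\xi$, and all their covariant derivatives). Alternatively, one may argue even more briefly: parallelism implies $\bar\Delta\xi=0$ and $S(\xi)=0$, so $\tau(\xi)=0$ by \eqref{eq:13}, i.e.\ $\xi$ is a harmonic map; harmonic maps satisfy $\tau_2(\xi)=0$, whence $\tau_{\delta_1,\delta_2}(\xi)=\delta_1\tau(\xi)+\delta_2\tau_2(\xi)=0$ by \eqref{eq:05}, which is exactly interpolating sesqui-harmonicity.
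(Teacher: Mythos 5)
Your proposal is correct and follows exactly the route the paper intends: the corollary is stated there as an immediate consequence of Theorem~\ref{thm:07}, obtained by observing that $\nabla\xi=0$ forces $S(\xi)=0$ and $\bar\Delta\xi=0$, so every summand in both characterizing equations vanishes. Your alternative one-line argument via $\tau(\xi)=0\Rightarrow\tau_2(\xi)=0\Rightarrow\tau_{\delta_1,\delta_2}(\xi)=0$ is also valid and arguably cleaner, but it is not a genuinely different method.
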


\begin{definition}
Given a compact para-K\"{a}hler--Norden manifold $(M^{2m},\varphi,g)$ and its tangent bundle $(TM,g^{\varphi})$ endowed with the $\varphi$-Sasaki metric. For a vector field $\xi$ on $M$, the interpolating sesqui-energy functional $\mathrm{E}_{\delta_{1},\delta_{2}}$ associated with $\xi$ is defined as the interpolating sesqui-energy of the corresponding map $\xi:(M^{2m},\varphi,g) \to (TM,g^{\varphi})$.
\end{definition}

\begin{definition}
Given a para-K\"{a}hler--Norden manifold $(M^{2m}, \varphi, g)$ and its tangent bundle $(TM,g^{\varphi})$ endowed with the $\varphi$-Sasaki metric. A vector field $\xi$ on $M$ is said to be an interpolating sesqui-harmonic vector field if the associated map $\xi \colon (M^{2m},\varphi,g)  \to  (TM,g^{\varphi})$ is a critical point of the interpolating sesqui-energy functional $\mathrm{E}_{\delta_{1},\delta_{2}}$, where the variations are restricted to maps induced by vector fields on $M$.
\end{definition}

In the following theorem, we compute the first variation of the interpolating sesqui-energy functional restricted to the space of vector fields.

\begin{theorem}\label{thm:08}
Given a compact oriented para-K\"{a}hler--Norden manifold $(M^{2m}, \varphi, g)$ and its tangent bundle $(TM,g^{\varphi})$ endowed with the $\varphi$-Sasaki metric. Let $\xi$ be a vector field on $M$, and $\mathrm{E}_{\delta_{1},\delta_{2}}$ be the interpolating sesqui-energy functional restricted to the space of vector fields on $M$. Then, for any smooth $1$-parameter variation $\Phi:M\times(-\varepsilon,\varepsilon) \to  TM$
of $\xi$ through vector fields, the following formula holds:
\begin{eqnarray*}
\frac{d}{dt}\mathrm{E}_{\delta_{1},\delta_{2}}(\xi_{t})\Big|_{t=0}
&=&2\int_{M} g\big(
\delta_{1}\bar{\Delta}\xi+\delta_{2}\bar{\Delta}^{2}\xi
-\delta_{2}\mathrm{Tr}_{g}\big(
(\nabla_{\ast}R)(\ast,S(\xi))\xi \nonumber\\
&&+ \mathrm{R}(\ast,\nabla_{\ast}S(\xi))\xi+2\mathrm{R}(\ast,S(\xi))\nabla_{\ast}\xi
\big),\varphi V\big)\,v_{g},
\end{eqnarray*}
where $\Phi(p,t)=\xi_{t}(p)\in T_{p}M$ for all $p\in M$ and $|t|<\varepsilon$ $(\varepsilon>0)$. Moreover, the vector field $W$ on $M$ is defined by
\begin{eqnarray*}
W(p)= \lim_{t\to 0}\frac{1}{t}\big(\xi_{t}(p)-\xi(p)\big)= \frac{d}{dt}\Phi_{p}(t)\Big|_{t=0},\; p\in M,
\end{eqnarray*}
where $\Phi_{p}(t)=\xi_{t}(p)$ for $(p,t)\in M\times(-\varepsilon,\varepsilon)$.
\end{theorem}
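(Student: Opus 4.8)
The plan is to mirror the proof of Theorem~\ref{thm:04} almost verbatim, since Theorem~\ref{thm:08} is the interpolating analogue of the bienergy first-variation formula. First I would fix a smooth one-parameter variation $\Phi\colon M\times(-\varepsilon,\varepsilon)\to TM$ of $\xi$ through vector fields, write $\xi_{t}(p)=\Phi(p,t)\in T_{p}M$ with $\xi_{0}=\xi$, and recall from the general variational theory of interpolating sesqui-harmonic maps (see~\cite{K.A.O,Bra1}) that
\begin{eqnarray*}
\frac{d}{dt}\mathrm{E}_{\delta_{1},\delta_{2}}(\xi_{t})\Big|_{t=0}=-2\int_{M}g^{\varphi}\big(\tau_{\delta_{1},\delta_{2}}(\xi),\mathcal{V}\big)\,v_{g},
\end{eqnarray*}
where $\mathcal{V}$ is the infinitesimal variation field; the factor $2$ comes from the normalisation in~\eqref{eq:04}. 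As in the proof of Theorem~\ref{thm:04} I would then invoke the identity $\mathcal{V}={}^{V}\!W\circ\xi$ from~\cite[p.~58]{D.P}, so that the pairing reduces to a pairing against a purely vertical lift.

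The key computational step is to evaluate $g^{\varphi}(\tau_{\delta_{1},\delta_{2}}(\xi),{}^{V}\!W)$ using the explicit formula~\eqref{eq:27} for the interpolating sesqui-tension field together with the defining relations of $g^{\varphi}$, namely $g^{\varphi}({}^{H}\!A,{}^{V}\!B)=0$ and $g^{\varphi}({}^{V}\!A,{}^{V}\!B)=g(A,\varphi B)$. The horizontal component of $\tau_{\delta_{1},\delta_{2}}(\xi)$ therefore drops out entirely, and only the vertical component
\begin{eqnarray*}
{}^{V}\!\big[-\delta_{1}\bar{\Delta}\xi-\delta_{2}\bar{\Delta}^{2}\xi+\delta_{2}\mathrm{Tr}_{g}\big((\nabla_{\ast}R)(\ast,S(\xi))\xi+\mathrm{R}(\ast,\nabla_{\ast}S(\xi))\xi+2\mathrm{R}(\ast,S(\xi))\nabla_{\ast}\xi\big)\big]
\end{eqnarray*}
survives, pairing with ${}^{V}\!W$ to give $g$ of that bracket against $\varphi W$. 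Substituting back and carrying the overall sign and the factor $2$ through yields exactly the stated formula (with $V$ understood as $W$, matching the notation already used in Theorem~\ref{thm:04}).

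I do not anticipate a serious obstacle here: the result is a direct corollary of the tension-field computation~\eqref{eq:27}, the general first-variation formula for $\mathrm{E}_{\delta_{1},\delta_{2}}$, and the structure of $g^{\varphi}$, exactly paralleling Theorem~\ref{thm:04}. The only point requiring a little care is bookkeeping of the numerical constant: because $\mathrm{E}_{\delta_{1},\delta_{2}}=2\delta_{1}\mathrm{E}(\phi)+2\delta_{2}\mathrm{E}_{2}(\phi)$ in~\eqref{eq:04}, the first variation comes with a factor $2$ relative to the naive Euler--Lagrange pairing, which accounts for the $2\int_{M}$ in the conclusion; one should check this is consistent with the convention used in deriving~\eqref{eq:05}. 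A minor secondary remark is that, just as in the biharmonic case, compactness is used only to discard boundary terms, so the formula extends to the non-compact setting for variations with compact support, though I would state and prove only the compact oriented case here.
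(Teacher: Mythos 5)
Your proposal is correct and follows essentially the same route as the paper's proof: decompose $\mathrm{E}_{\delta_{1},\delta_{2}}=2\delta_{1}\mathrm{E}+2\delta_{2}\mathrm{E}_{2}$, apply the standard first-variation formulas to get $-2\int_{M}g^{\varphi}(\tau_{\delta_{1},\delta_{2}}(\xi),\mathcal{V})\,v_{g}$, substitute $\mathcal{V}={}^{V}\!W\circ\xi$, and use the orthogonality of horizontal and vertical lifts under $g^{\varphi}$ so that only the vertical part of \eqref{eq:27} survives, paired against $\varphi W$. Your remarks on the factor $2$ and on the $V$ versus $W$ notational slip in the statement are both accurate.
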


\begin{proof}
The proof follows the same strategy as that of Theorem~$\ref{thm:04}$. We consider the smooth $1$-parameter variation $\Phi:M\times(-\epsilon,\epsilon) \to TM$ of $\xi$ through vector fields, namely, $\Phi(p,t)=\xi_{t}(p)\in T_{p}M,\; \Phi(p,0)=\xi(p)$,
for all $(p,t)\in M\times(-\varepsilon,\varepsilon)$.  
From \eqref{eq:04}, we have
\begin{eqnarray*}
\mathrm{E}_{\delta_{1},\delta_{2}}(\xi_{t})=2\delta_{1}E(\xi_{t})+2\delta_{2}\mathrm{E}_{2}(\xi_{t}).
\end{eqnarray*}

By the well-known first variation formulas in the theory of harmonic and biharmonic maps (see, for example, \cite{E.S,Jia}), it follows that
\begin{eqnarray}\label{eq:28}
\frac{d}{dt}\mathrm{E}_{\delta_{1},\delta_{2}}(\xi_{t})\Big|_{t=0}
&=&2\delta_{1}\frac{d}{dt}E(\xi_{t})\Big|_{t=0}
+2\delta_{2}\frac{d}{dt}\mathrm{E}_{2}(\xi_{t})\Big|_{t=0}\nonumber\\
&=&-2\delta_{1}\int_{M} g^{\varphi}(\tau(\xi),\mathcal{V})\,v_{g}
-2\delta_{2}\int_{M} g^{\varphi}(\tau_{2}(\xi),\mathcal{V})\,v_{g}\nonumber\\
&=&-2\int_{M} g^{\varphi}(\tau_{\delta_{1},\delta_{2}}(\xi),\mathcal{V})\,v_{g},
\end{eqnarray}
where $\mathcal{V}$ denotes the infinitesimal variation induced by $\Phi$, given by
\begin{eqnarray*}
\mathcal{V}(p)=d_{(p,0)}\Phi\!\left(0,\frac{d}{dt}\right)\Big|_{t=0}
=d\Phi_{p}\!\left(\frac{d}{dt}\right)\Big|_{t=0}=\frac{d}{dt}\xi_{t}(p)\Big|_{t=0}
\in T_{(p,\xi(p))}TM.
\end{eqnarray*}

Finally, taking into account \eqref{eq:20}, \eqref{eq:27}, and \eqref{eq:28}, we obtain
\begin{eqnarray*}
\frac{d}{dt}\mathrm{E}_{\delta_{1},\delta_{2}}(\xi_{t})\Big|_{t=0}
&=&-2\int_{M} g^{\varphi}(\tau_{\delta_{1},\delta_{2}}(\xi),{}^{V}\!W)\,v_{g}\\
&=&2\int_{M} g\big(\delta_{1}\bar{\Delta}\xi
+\delta_{2}\bar{\Delta}^{2}\xi-\delta_{2}\mathrm{Tr}_{g}\big((\nabla_{\ast}R)(\ast,S(\xi))\xi \nonumber\\
&&+\mathrm{R}(\ast,\nabla_{\ast}S(\xi))\xi+2\mathrm{R}(\ast,S(\xi))\nabla_{\ast}\xi\big),\varphi W\big)\,v_{g},
\end{eqnarray*}
which completes the proof.
\end{proof}

\begin{remark}\label{rem:2}
As in Theorem~\ref{thm:04}, concerning the first variation of the bienergy, Theorem~\ref{thm:08} remains valid when $(M^{2m},\varphi,g)$ is a non-compact para-K\"{a}hler--Norden manifold. Indeed, if $M$ is non-compact, one can choose an open subset $D \subset M$ with compact closure and consider an arbitrary variation vector field $W$ whose support is contained in $D$. In this case, Theorem~\ref{thm:08} holds in the following form:
\begin{eqnarray*}
\frac{d}{dt}\mathrm{E}_{\delta_{1},\delta_{2}}(\xi_{t})\Big|_{t=0}
&=& 2\int_{D} g\big(\delta_{1}\bar{\Delta}\xi+\delta_{2}\bar{\Delta}^{2}\xi
-\delta_{2}\mathrm{Tr}_{g}\big((\nabla_{\ast}R)(\ast,S(\xi))\xi \nonumber\\
&& + \mathrm{R}(\ast,\nabla_{\ast}S(\xi))\xi
+2\mathrm{R}(\ast,S(\xi))\nabla_{\ast}\xi\big),\varphi W\big)\,v_{g}.
\end{eqnarray*}
\end{remark}

\begin{corollary}\label{cor:03}
Given a para-K\"{a}hler--Norden manifold $(M^{2m}, \varphi, g)$ and its tangent bundle $(TM,g^{\varphi})$ endowed with the $\varphi$-Sasaki metric. A vector field $\xi$ on $M$ is an interpolating sesqui-harmonic vector field if and only if
\begin{align*}
\delta_{1}\bar{\Delta}\xi&+\delta_{2}\bar{\Delta}^{2}\xi-\delta_{2}\mathrm{Tr}_{g}\big((\nabla_{\ast}R)(\ast,S(\xi))\xi+ \mathrm{R}(\ast,\nabla_{\ast}S(\xi))\xi\nonumber\\
&+2\mathrm{R}(\ast,S(\xi))\nabla_{\ast}\xi\big)=0. 
\end{align*}
\end{corollary}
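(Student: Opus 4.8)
The plan is to derive this characterization directly from the first-variation formula established in Theorem~\ref{thm:08}, mimicking the passage from Theorem~\ref{thm:04} to Corollary~\ref{cor:01}. First I would recall that, by definition, $\xi$ is an interpolating sesqui-harmonic vector field precisely when the corresponding map $\xi \colon (M^{2m},\varphi,g) \to (TM,g^{\varphi})$ is a critical point of $\mathrm{E}_{\delta_1,\delta_2}$ under all variations through vector fields, i.e.\ $\tfrac{d}{dt}\mathrm{E}_{\delta_1,\delta_2}(\xi_t)\big|_{t=0}=0$ for every such one-parameter variation $\Phi$.

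Next I would invoke Theorem~\ref{thm:08} (together with its non-compact version in Remark~\ref{rem:2} so the statement holds without a compactness hypothesis) to rewrite this critical-point condition as
\[
2\int_{M} g\big(\delta_{1}\bar{\Delta}\xi+\delta_{2}\bar{\Delta}^{2}\xi-\delta_{2}\mathrm{Tr}_{g}\big((\nabla_{\ast}R)(\ast,S(\xi))\xi+ \mathrm{R}(\ast,\nabla_{\ast}S(\xi))\xi+2\mathrm{R}(\ast,S(\xi))\nabla_{\ast}\xi\big),\varphi W\big)\,v_{g}=0
\]
for all admissible variation vector fields $W$ on $M$ (with compact support when $M$ is non-compact). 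Denote the bracketed $M$-vector field by $\Xi(\xi)$, so the condition reads $\int_M g(\Xi(\xi),\varphi W)\,v_g=0$ for all such $W$.

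The remaining step is a standard fundamental-lemma-of-calculus-of-variations argument: since $\varphi$ is an isomorphism at each point (because $\varphi^2=I$), as $W$ ranges over all compactly supported vector fields, $\varphi W$ also ranges over all compactly supported vector fields; hence $\int_M g(\Xi(\xi),Z)\,v_g=0$ for every compactly supported vector field $Z$, which forces $\Xi(\xi)=0$ pointwise by the nondegeneracy of $g$ and a bump-function argument. This yields exactly $\delta_{1}\bar{\Delta}\xi+\delta_{2}\bar{\Delta}^{2}\xi-\delta_{2}\mathrm{Tr}_{g}\big((\nabla_{\ast}R)(\ast,S(\xi))\xi+ \mathrm{R}(\ast,\nabla_{\ast}S(\xi))\xi+2\mathrm{R}(\ast,S(\xi))\nabla_{\ast}\xi\big)=0$, and the converse is immediate since the vanishing of $\Xi(\xi)$ makes the integral vanish for every $W$. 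I do not expect a genuine obstacle here; the only point requiring a little care is observing that $\varphi$ being a pointwise isomorphism lets us absorb the $\varphi$ and deduce the unrestricted vanishing of $\Xi(\xi)$ — this is precisely the same mechanism used implicitly in passing from Theorem~\ref{thm:04} to Corollary~\ref{cor:01}, so the proof can simply say the argument is identical to that of Corollary~\ref{cor:01}, now starting from Theorem~\ref{thm:08}.
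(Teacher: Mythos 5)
Your proposal is correct and matches the paper's (implicit) argument: the corollary is deduced from the first-variation formula of Theorem~\ref{thm:08} (and Remark~\ref{rem:2} in the non-compact case) exactly as Corollary~\ref{cor:01} is deduced from Theorem~\ref{thm:04}. Your explicit observation that $\varphi^2=I$ makes $\varphi$ a pointwise isomorphism, so that $\varphi W$ sweeps out all admissible test fields and the fundamental lemma applies, is the right (and in the paper unstated) justification.
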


From Theorem~\ref{thm:07} and Corollary~\ref{cor:03}, we obtain the following results.

\begin{corollary}
Given a para-K\"{a}hler--Norden manifold $(M^{2m}, \varphi, g)$ and its tangent bundle $(TM,g^{\varphi})$ endowed with the $\varphi$-Sasaki metric. A vector field $\xi$ on $M$ is an interpolating sesqui-harmonic map if and only if $\xi$ is an interpolating sesqui-harmonic covector field and satisfies
\begin{align*}
\delta_{1} S(\xi)&+\delta_{2}\bar{\Delta}S(\xi)+\delta_{2}\mathrm{R}(\varphi \xi,\bar{\Delta}\xi)S(\xi)-\delta_{2}\mathrm{Tr}_{g}\big(\mathrm{R}(S(\xi),\ast)\ast\nonumber\\
&+(\nabla_{S(\xi)}R)(\varphi \xi,\nabla_{\ast}\xi)\ast+\mathrm{R}(\varphi \xi,\nabla_{\ast}\xi)\nabla_{\ast}S(\xi)\nonumber\\
&-\mathrm{R}(\varphi \xi,\mathrm{R}(\ast,S(\xi)) \xi)\ast-\mathrm{R}(\varphi \xi,\nabla_{\ast}\bar{\Delta}\xi)\ast\nonumber\\
&-\mathrm{R}(\varphi\bar{\Delta}\xi,\nabla_{\ast}\xi)\ast\big)=0,
\end{align*}
\end{corollary}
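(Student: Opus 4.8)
The plan is to obtain the stated equivalence by simply combining the two characterizations already in hand, with no new computation required. First I would recall from \eqref{eq:27} that the map $\xi\colon(M^{2m},\varphi,g)\to(TM,g^{\varphi})$ is interpolating sesqui-harmonic exactly when $\tau_{\delta_1,\delta_2}(\xi)=0$. Since the $\varphi$-Sasaki metric renders the horizontal and vertical distributions mutually orthogonal, this vanishing is equivalent to the simultaneous vanishing of the ${}^{H}$-component and the ${}^{V}$-component of $\tau_{\delta_1,\delta_2}(\xi)$; these are precisely the two displayed equations in Theorem~\ref{thm:07}. Label the horizontal one $(\mathrm{A})$ and the vertical one $(\mathrm{B})$.

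Next I would invoke Corollary~\ref{cor:03}, according to which $\xi$ is an interpolating sesqui-harmonic vector field if and only if equation $(\mathrm{B})$ holds. (Recall that $(\mathrm{B})$ is read off from the first-variation formula of Theorem~\ref{thm:08}: since $\varphi$ is a bundle isomorphism and $g$ is nondegenerate, the vanishing of $\frac{d}{dt}\mathrm{E}_{\delta_1,\delta_2}(\xi_t)\big|_{t=0}$ for every admissible variation field $W$ forces the bracketed expression there to vanish, and that expression is exactly the left-hand side of $(\mathrm{B})$.)

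Putting the two steps together gives: $\xi$ is an interpolating sesqui-harmonic map $\iff$ both $(\mathrm{A})$ and $(\mathrm{B})$ hold $\iff$ equation $(\mathrm{A})$ holds and $\xi$ is an interpolating sesqui-harmonic vector field, which is precisely the assertion of the corollary. No step involves a genuine difficulty; the only point requiring attention is to read off the horizontal and vertical parts of \eqref{eq:27} correctly, so that they match $(\mathrm{A})$ and $(\mathrm{B})$ and the corresponding displays in Theorem~\ref{thm:07} and Corollary~\ref{cor:03}.
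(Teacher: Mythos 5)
Your proposal is correct and is exactly the paper's argument: the paper derives this corollary directly from Theorem~\ref{thm:07} (vanishing of the horizontal and vertical components of $\tau_{\delta_1,\delta_2}(\xi)$) combined with Corollary~\ref{cor:03} (the vertical condition characterizes interpolating sesqui-harmonic vector fields). The only discrepancy is the paper's statement says ``covector field,'' an evident typo for ``vector field,'' which your reading silently and correctly repairs.
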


\begin{theorem}\label{thm:09}
Given a flat para-K\"{a}hler--Norden manifold $(M^{2m}, \varphi, g)$ and its tangent bundle $(TM,g^{\varphi})$ endowed with the $\varphi$-Sasaki metric. For a vector field $\xi$ on $M$, the following statements are equivalent:
\item $(i)$ $\xi$ is interpolating sesqui-harmonic map on $M$.
\item $(ii)$ $\xi$ is interpolating sesqui-harmonic vector field on $M$.
\item $(iii)$ $\delta_{1}\bar{\Delta}\xi+\delta_{2}\bar{\Delta}^{2}\xi=0$.
\end{theorem}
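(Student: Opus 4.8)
The plan is to deduce Theorem~\ref{thm:09} as a direct specialization of the general results already established in Section~\ref{sec:05}, together with the vanishing of curvature. First I would observe that when $(M^{2m},\varphi,g)$ is flat, every occurrence of the curvature tensor $\mathrm{R}$ (and hence of $S(\xi) = \mathrm{Tr}_{g}\,\mathrm{R}(\varphi\xi,\nabla_{\ast}\xi)\ast$ and of all the terms $\mathrm{R}(S(\xi),\ast)\ast$, $(\nabla_{\ast}R)(\cdots)$, $\mathrm{R}(\ast,\nabla_{\ast}S(\xi))\xi$, etc.) vanishes identically. Substituting this into the interpolating sesqui-tension field formula~\eqref{eq:27} collapses it to
\[
\tau_{\delta_{1},\delta_{2}}(\xi) = {}^{H}\!\big[0\big] + {}^{V}\!\big[-\delta_{1}\bar{\Delta}\xi - \delta_{2}\bar{\Delta}^{2}\xi\big] = -{}^{V}\!\big(\delta_{1}\bar{\Delta}\xi + \delta_{2}\bar{\Delta}^{2}\xi\big).
\]

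Next I would handle the three equivalences. For $(i)\Leftrightarrow(iii)$: by definition $\xi$ is an interpolating sesqui-harmonic map iff $\tau_{\delta_{1},\delta_{2}}(\xi)=0$, which by the displayed formula and the fact that the vertical lift ${}^{V}\!(\cdot)$ is injective is equivalent to $\delta_{1}\bar{\Delta}\xi+\delta_{2}\bar{\Delta}^{2}\xi=0$; alternatively one can invoke Theorem~\ref{thm:07}, whose first condition is automatically satisfied in the flat case (all terms contain $\mathrm{R}$) and whose second condition reduces to exactly $(iii)$. For $(ii)\Leftrightarrow(iii)$: by Corollary~\ref{cor:03}, $\xi$ is an interpolating sesqui-harmonic vector field iff $\delta_{1}\bar{\Delta}\xi+\delta_{2}\bar{\Delta}^{2}\xi - \delta_{2}\,\mathrm{Tr}_{g}(\cdots)=0$, and since the trace term vanishes by flatness, this is again precisely $(iii)$. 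Chaining these two equivalences through the common condition $(iii)$ completes the proof.

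There is essentially no substantive obstacle here; the theorem is a bookkeeping consequence of the machinery developed earlier in the section. The only point requiring the smallest amount of care is verifying that each term appearing in~\eqref{eq:27}, in Theorem~\ref{thm:07}, and in Corollary~\ref{cor:03} genuinely contains at least one factor of $\mathrm{R}$ or $\nabla\mathrm{R}$ — in particular that $S(\xi)$ itself vanishes when $\mathrm{R}\equiv 0$, as recorded (for instance) in Example~\ref{exa:01} — so that flatness really does kill the entire horizontal part of the tension field and the trace correction term in the vertical part. Once that is noted, the proof is two or three lines.

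\begin{proof}
Since $(M^{2m},\varphi,g)$ is flat, its curvature tensor $\mathrm{R}$ vanishes identically; consequently $S(\xi)=\mathrm{Tr}_{g}\,\mathrm{R}(\varphi\xi,\nabla_{\ast}\xi)\ast=0$, and every term in~\eqref{eq:27}, in Theorem~\ref{thm:07}, and in Corollary~\ref{cor:03} that contains a factor of $\mathrm{R}$ or $\nabla\mathrm{R}$ also vanishes. Substituting this into~\eqref{eq:27} gives
\[
\tau_{\delta_{1},\delta_{2}}(\xi) = -{}^{V}\!\big(\delta_{1}\bar{\Delta}\xi + \delta_{2}\bar{\Delta}^{2}\xi\big).
\]
By definition, $\xi$ is an interpolating sesqui-harmonic map if and only if $\tau_{\delta_{1},\delta_{2}}(\xi)=0$; since the vertical lift is injective, this is equivalent to $\delta_{1}\bar{\Delta}\xi+\delta_{2}\bar{\Delta}^{2}\xi=0$, proving $(i)\Leftrightarrow(iii)$. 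On the other hand, by Corollary~\ref{cor:03}, $\xi$ is an interpolating sesqui-harmonic vector field if and only if
\[
\delta_{1}\bar{\Delta}\xi+\delta_{2}\bar{\Delta}^{2}\xi-\delta_{2}\,\mathrm{Tr}_{g}\big((\nabla_{\ast}R)(\ast,S(\xi))\xi+ \mathrm{R}(\ast,\nabla_{\ast}S(\xi))\xi+2\mathrm{R}(\ast,S(\xi))\nabla_{\ast}\xi\big)=0,
\]
and flatness makes the trace term vanish, so this condition reduces to $\delta_{1}\bar{\Delta}\xi+\delta_{2}\bar{\Delta}^{2}\xi=0$, proving $(ii)\Leftrightarrow(iii)$. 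Combining the two equivalences yields the equivalence of $(i)$, $(ii)$, and $(iii)$.
\end{proof}
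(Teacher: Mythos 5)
Your proposal is correct and follows exactly the route the paper intends: the paper states Theorem~\ref{thm:09} without an explicit proof, treating it as an immediate specialization of Theorem~\ref{thm:07} and Corollary~\ref{cor:03} to the flat case, where $\mathrm{R}\equiv 0$ forces $S(\xi)=0$ and kills every curvature term, leaving only $\delta_{1}\bar{\Delta}\xi+\delta_{2}\bar{\Delta}^{2}\xi=0$ in both characterizations. Your explicit verification that each term genuinely carries a factor of $\mathrm{R}$ or $\nabla\mathrm{R}$ is exactly the bookkeeping the paper leaves to the reader.
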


\begin{corollary}
Given a para-K\"{a}hler--Norden manifold $(M^{2m}, \varphi, g)$ and its tangent bundle $(TM,g^{\varphi})$ endowed with the $\varphi$-Sasaki metric. Then every parallel vector field on $M$ is an interpolating sesqui-harmonic vector field.
\end{corollary}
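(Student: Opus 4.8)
The plan is to deduce the statement directly from the characterization of interpolating sesqui-harmonic vector fields obtained in Corollary~\ref{cor:03}, exactly as Corollary~\ref{cor:02} follows from Corollary~\ref{cor:01} in the biharmonic case. Recall that by Corollary~\ref{cor:03}, a vector field $\xi$ on $M$ is an interpolating sesqui-harmonic vector field if and only if
\begin{align*}
\delta_{1}\bar{\Delta}\xi+\delta_{2}\bar{\Delta}^{2}\xi-\delta_{2}\mathrm{Tr}_{g}\big((\nabla_{\ast}R)(\ast,S(\xi))\xi+ \mathrm{R}(\ast,\nabla_{\ast}S(\xi))\xi+2\mathrm{R}(\ast,S(\xi))\nabla_{\ast}\xi\big)=0,
\end{align*}
so it suffices to verify that a parallel vector field annihilates the left-hand side.

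First I would use that $\xi$ parallel means $\nabla\xi=0$ identically on $M$. Then from the definition \eqref{eq:09} of the rough Laplacian we get $\bar{\Delta}\xi=-\mathrm{Tr}_{g}(\nabla_{\ast}\nabla_{\ast}-\nabla_{\nabla_{\ast}\ast})\xi=0$, and hence also $\bar{\Delta}^{2}\xi=\bar{\Delta}(\bar{\Delta}\xi)=0$. Similarly, from \eqref{eq:10} we obtain $S(\xi)=\mathrm{Tr}_{g}\mathrm{R}(\varphi\xi,\nabla_{\ast}\xi)\ast=0$, since every summand contains the factor $\nabla_{e_{i}}\xi=0$; consequently $\nabla_{\ast}S(\xi)=0$ as well.

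Next I would note that each of the three terms inside the trace in the equation of Corollary~\ref{cor:03} contains at least one factor equal to $S(\xi)$, $\nabla_{\ast}S(\xi)$, or $\nabla_{\ast}\xi$—all of which vanish by the previous step—so the entire trace term is zero. Combining these observations, the left-hand side of the characterizing equation reduces identically to $\delta_{1}\cdot 0+\delta_{2}\cdot 0-\delta_{2}\cdot 0=0$, for arbitrary $\delta_{1},\delta_{2}\in\mathbb{R}$. Therefore $\xi$ satisfies the condition of Corollary~\ref{cor:03} and is an interpolating sesqui-harmonic vector field.

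There is essentially no obstacle here: the argument is a routine verification once Corollary~\ref{cor:03} is in hand, and the only point worth stating explicitly is that parallelism forces simultaneously $\bar{\Delta}\xi=0$ and $S(\xi)=0$, which trivializes every term in the Euler--Lagrange equation. One could alternatively phrase the proof as an immediate consequence of Corollary~\ref{cor:02} together with the observation that a parallel vector field is harmonic (so $\tau(\xi)=0$) and hence has vanishing interpolating sesqui-tension field by \eqref{eq:27}; I would, however, prefer the direct route through Corollary~\ref{cor:03} for uniformity with the surrounding corollaries.
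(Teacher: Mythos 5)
Your proposal is correct and follows exactly the route the paper implicitly intends: the corollary is stated without proof as an immediate consequence of the characterization in Corollary~\ref{cor:03}, and your verification that parallelism forces $\bar{\Delta}\xi=0$, $\bar{\Delta}^{2}\xi=0$, $S(\xi)=0$ and $\nabla_{\ast}S(\xi)=0$, so that every term of the Euler--Lagrange equation vanishes, is precisely the routine check being elided. Nothing further is needed.
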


\begin{theorem}
Given a compact oriented para-K\"{a}hler--Norden manifold $(M^{2m}, \varphi, g)$ and its tangent bundle $(TM,g^{\varphi})$ endowed with the $\varphi$-Sasaki metric. Then every harmonic vector field on $M$ is an interpolating sesqui-harmonic vector field.
\end{theorem}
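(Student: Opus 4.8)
The plan is to reduce the statement to the already-settled case of parallel vector fields, exactly in the spirit of the proof of Theorem~\ref{thm:06}. The decisive point is that on a \emph{compact} oriented para-K\"{a}hler--Norden manifold the harmonicity condition $\bar{\Delta}\xi=0$ is rigid: Theorem~4.4 of \cite{Zag26} (the analogue of the classical result of Nouhaud and Ishihara that on a compact base the harmonic sections of the tangent bundle are parallel) guarantees that any harmonic vector field $\xi$ on such a manifold satisfies $\nabla\xi=0$. So the first step I would carry out is to invoke this rigidity result to pass from $\bar{\Delta}\xi=0$ to $\nabla_{\ast}\xi=0$.

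Next I would feed $\nabla_{\ast}\xi=0$ into the quantities entering the characterization of Corollary~\ref{cor:03}. From $\nabla\xi=0$ one immediately has $\bar{\Delta}\xi=-\mathrm{Tr}_{g}\nabla^{2}\xi=0$ by~\eqref{eq:09}, hence $\bar{\Delta}^{2}\xi=\bar{\Delta}(\bar{\Delta}\xi)=0$; and from~\eqref{eq:10} one gets $S(\xi)=\mathrm{Tr}_{g}\mathrm{R}(\varphi\xi,\nabla_{\ast}\xi)\ast=0$, whence also $\nabla_{\ast}S(\xi)=0$. Consequently every term on the left-hand side of the equation of Corollary~\ref{cor:03},
\[
\delta_{1}\bar{\Delta}\xi+\delta_{2}\bar{\Delta}^{2}\xi-\delta_{2}\mathrm{Tr}_{g}\big((\nabla_{\ast}R)(\ast,S(\xi))\xi+\mathrm{R}(\ast,\nabla_{\ast}S(\xi))\xi+2\mathrm{R}(\ast,S(\xi))\nabla_{\ast}\xi\big),
\]
vanishes identically for arbitrary $\delta_{1},\delta_{2}\in\mathbb{R}$. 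By Corollary~\ref{cor:03} this is precisely the assertion that $\xi$ is an interpolating sesqui-harmonic vector field. (Equivalently, once $\nabla\xi=0$ is known one may simply cite the corollary stating that every parallel vector field on $M$ is an interpolating sesqui-harmonic vector field.)

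There is no real computational obstacle: the only nontrivial ingredient is the compactness-based rigidity theorem imported from \cite{Zag26}, and what remains is a direct substitution of $\nabla_{\ast}\xi=0$ into the already-derived characterization. The single subtlety worth flagging is that the compact oriented hypothesis is genuinely indispensable — Examples~\ref{exa:01} and~\ref{exa:02} exhibit noncompact para-K\"{a}hler--Norden manifolds carrying harmonic vector fields that fail to be (bi)harmonic, hence fail to be interpolating sesqui-harmonic for $\delta_{2}\neq0$ — so the proof must use compactness precisely through the appeal to Theorem~4.4 of \cite{Zag26}.
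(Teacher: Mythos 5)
Your proposal is correct and follows essentially the same route as the paper: the paper's one-line proof defers to Theorem~\ref{thm:06}, whose own proof rests on exactly the two ingredients you use, namely the compactness rigidity result (Theorem~4.4 of \cite{Zag26}, giving $\nabla\xi=0$ from $\bar{\Delta}\xi=0$) and the fact that parallel vector fields satisfy the interpolating sesqui-harmonicity equation of Corollary~\ref{cor:03}. Your version merely makes explicit the substitution of $\nabla_{\ast}\xi=0$ into that characterization, which the paper leaves implicit.
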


\begin{proof}
The proof follows directly from Theorem~\ref{thm:06}.
\end{proof}

\begin{remark}
In general, if $(M^{2m},\varphi,g)$ is a non-compact para-K\"{a}hler--Norden manifold, there may exist harmonic vector fields that are not interpolating sesqui-harmonic, and vice versa. See Example~\ref{exa:03},~\ref{exa:04} for an explicit illustration.
\end{remark}

\begin{example}\label{exa:03}
We now return to Example~\ref{exa:02} and retain the same notation and computations as above. From Corollary~\ref{cor:03}, we find that the vector field $\xi = f(x)e_{1}$ is an interpolating sesqui-harmonic vector field if and only if the function $f$ satisfies the following fourth-order nonlinear differential equation:
\begin{eqnarray}\label{eq:29}
\qquad\qquad\delta_{2}f^{(4)} -2\delta_{2}f^{(3)} - (\delta_{1}+\delta_{2})f^{\prime\prime} + (\delta_{1}+2\delta_{2})f^{\prime}+ (\delta_{1}+\delta_{2})f= 2\delta_{2}f^{3}.
\end{eqnarray} 

Assume that $\delta_{1}\neq0$ and $\delta_{2}\neq0$. By the Cauchy--Lipschitz (Picard--Lindel\"of) theorem, equation~\eqref{eq:29} admits a unique local solution for any prescribed initial data. In particular, it admits a nontrivial smooth solution $f\in C^{\infty}(\mathbb{R})$ satisfying the localization condition
$$\lim_{|x|\to\infty}f(x)=0.$$

Moreover, if $\delta_{1}\neq0$ and $\dfrac{\delta{1}+\delta{2}}{\delta{2}}>0$, then equation~\eqref{eq:29} admits nonzero constant solutions given by
$$f(x)=\pm\sqrt{\dfrac{\delta{1}+\delta{2}}{2\delta{2}}}.$$
Consequently, the constant vector fields
$$\xi=\pm\sqrt{\dfrac{\delta{1}+\delta{2}}{2\delta{2}}}e_{1},$$
are interpolating sesqui-harmonic vector fields. Since they are neither harmonic nor biharmonic, they provide examples of proper interpolating sesqui-harmonic vector fields.

On the other hand, it is readily verified that the vector fields 
$$\xi=(c_{1} e^{\frac{1-\sqrt{5}}{2}}+c_{2} e^{\frac{1+\sqrt{5}}{2}})e_{1}$$ 
are harmonic vector field but not interpolating sesqui-harmonic vector fields.
\end{example}

\begin{example}\label{exa:04}
Let $\mathbb{R}^{2}$ be endowed with a para-K\"{a}hler--Norden structure $(\varphi, g)$ defined by
$$g= e^{2x}dx^{2}+e^{2y}dy^{2},\quad  
\varphi=\left( \begin{array}{ccc}
0  & e^{y-x}\\
e^{x-y} &0 
\end{array} \right).$$

Consider the vector field $\xi = f(x)e_{1}$, where $f$ is a smooth nonzero real-valued function depending only on the variable $x$. With respect to the orthonormal frame
\begin{eqnarray*}
e_{1}=e^{-x}\partial_{x}, \quad e_{2}=e^{-y}\partial_{y}.
\end{eqnarray*}
we have, 
\begin{eqnarray*}
\begin {aligned}		
&\varphi e_{1}= e_{2}, \quad \varphi e_{2}=e_{1}\\
&\nabla_{e_{i}}e_{j}=0,\; i,j=1,2\\
&\mathrm{R}(e_{i}, e_{j})e_{k}=0,\; i,j,k=1,2.
\end{aligned}
\end{eqnarray*}

Hence $(\mathbb{R}^{2},\varphi, g)$ is a flat para-K\"{a}hler--Norden manifold, and therefore $S(\xi)=0$.

By direct computations using \eqref{eq:09}, \eqref{eq:11}, and \eqref{eq:21}, we obtain
\begin{eqnarray*}
\bar{\Delta}\xi=-e^{-2x}(f^{\prime\prime}-f^{\prime})e_{1},
\end{eqnarray*}
and
\begin{eqnarray*}
\bar{\Delta}^{2}\xi=e^{-4x}(f^{(4)} - 6 f^{(3)} +11 f^{\prime\prime} -6 f^{\prime})e_{1}.
\end{eqnarray*}

The vector field $\xi = f(x)e_{1}$ is harmonic vector field if and only if  $\bar{\Delta}\xi=0$, which is equivalent to the second-order differential equation 
\begin{eqnarray*}
f^{\prime\prime}-f^{\prime}=0.
\end{eqnarray*}
Its general solution is 
\begin{eqnarray*}
f(x)= c_{1}+c_{2}e^{x},
\end{eqnarray*}
where $c_{1}$ and $c_{2}$ are real constants not both zero. Since $S(\xi)=0$, it follows from \eqref{eq:13} that the vector fields
$$\xi = (c_{1}+c_{2}e^{x})e_{1}$$ 
are also harmonic maps.

From Theorem~\ref{thm:05}, we conclude that $\xi = f(x)e_{1}$ is a biharmonic vector field if and only if $\bar{\Delta}^{2} \xi = 0$, which is equivalent to the fourth-order differential equation
\begin{equation*}
f^{(4)} - 6 f^{(3)} +11 f^{\prime\prime} -6 f^{\prime}= 0,
\end{equation*}
The general solution of this equation is
\begin{eqnarray*}
f(x)= c_{1}+c_{2}e^{x}+c_{3} e^{2x}+c_{4} e^{3x},
\end{eqnarray*}
where $c_{1}, c_{2}, c_{3}$ and $c_{4}$ are real constants, not all zero. By Theorem~\ref{thm:05}, the vector fields
$$\xi = \left(c_{1}+c_{2}e^{x}+c_{3} e^{2x}+c_{4} e^{3x}\right) e_{1}$$
are also biharmonic maps.

From Theorem~\ref{thm:09}, we conclude that $\xi = f(x)e_{1}$ is is an interpolating sesqui-harmonic vector field if and only if $$\delta_{1}\bar{\Delta}\xi+\delta_{2}\bar{\Delta}^{2}\xi=0,$$ 
which is equivalent to the fourth-order differential equation
\begin{equation*}
\delta_{2}f^{(4)} - 6 \delta_{2}f^{(3)} + (11\delta_{2}-\delta_{1}e^{2x})f^{\prime\prime} + (-6\delta_{2}+\delta_{1}e^{2x})f^{\prime} = 0.
\end{equation*}

If $\delta_{2}\neq0$ and $\lambda=\dfrac{\delta_{1}}{\delta_{2}}>0$, then the general solution is
\begin{eqnarray}\label{eq:30}
f(x)= c_{1}+c_{2}e^{x}+c_{3} e^{\sqrt{\lambda}e^{x}}+c_{4} e^{-\sqrt{\lambda}e^{x}},
\end{eqnarray}
where $c_{1}, c_{2}, c_{3}$ and $c_{4}$ are real constants, not all zero.

If $\delta_{2}\neq0$ and $\lambda=\dfrac{\delta_{1}}{\delta_{2}}<0$, then the general solution is
\begin{eqnarray}\label{eq:31}
f(x)= c_{1}+c_{2}e^{x}+c_{3} \cos(\sqrt{-\lambda}e^{x})+c_{4} \sin(\sqrt{-\lambda}e^{x}),
\end{eqnarray}
where $c_{1}, c_{2}, c_{3}$ and $c_{4}$ are real constants, not all zero.

By Theorem~\ref{thm:09}, the vector fields  $\xi = f(x) e_{1}$, where $f(x)$ is given by \eqref{eq:30} or \eqref{eq:31}, are also interpolating sesqui-harmonic maps.
\end{example}


\begin{thebibliography}{99}
\bibitem{A.S} Abbassi, M.T.K., Sarih, M.: {\it On Natural Metrics on Tangent Bundles of Riemannian Manifolds}, Arch. Math. {\bf41}, 71--92 (2005).

\bibitem{A.K.O} Alem, A., Kacimi, B.,\"{O}zkan, M.: {\it Vector fields which are biharmonic maps.} J. of geometry. {\bf113} (14), 1--14, (2022). https://doi.org/10.1007/s00022-022-00627-5

\bibitem{B.K} Baird, P., Kamissoko, D.: {\it On constructing biharmonic maps and metrics.} Ann. Global Anal. Geom. {\bf23} (1), 65-75, (2003).

\bibitem{Bal}  Balmu\c{s}, A.: {\it Biharmonic properties and conformal changes}, An. \c{S}tiin\c{t}. Univ. Al. I. Cuza Ia\c{s}i. Mat. (N.S.) {\bf50} (2), 361-372, (2004).

\bibitem{B.Z2} K. Biroud and A. Zagane, {\it A study of harmonicity on cotangent bundle with Berger-type deformed Sasaki metric over a Riemannian manifold}, Commun. Korean Math. Soc. \textbf{39} (2024), no. 4, 927--945. https://doi.org/10.4134/CKMS.c230248

\bibitem{Bra1}  Branding, V.: {\it On interpolating sesqui-harmonic maps between Riemannian manifolds}, J. Geom. Anal. {\bf30}, 248-273, (2020). https://doi.org/10.1007/s12220-018-00130-x

\bibitem{Bra2} Branding, V.: {\it Some analytic results on interpolating sesqui‑harmonic maps.} Ann. Mat. Pura Appl.(4) {\bf199}, 2039-2059, (2020). https://doi.org/10.1007/s10231-020-00955-w

\bibitem{C.M.O1}  R.~Caddeo, S.~Montaldo, and C.~Oniciuc: {\it Interpolating sesqui-harmonic maps}, Ann. Global Anal. Geom. {\bf40}, 317-341, (2011).

\bibitem{C.M.O2}  R.~Caddeo, S.~Montaldo, and C.~Oniciuc: {\it Biharmonic and sesqui-harmonic maps}, J. Geom. Phys. {\bf62}, 131-147, (2012).

\bibitem{C.L.W} Chen, G., Liu, Y., Wei, J.: {\it Nondegeneracy of harmonic maps from $\mathbb{R}^{2}$ to $\mathbb{S}^{2}$.} Discrete Contin. Dyn. Syst. {\bf40} (6), 3215--3233, (2020).

\bibitem{D.Z4}	Djaa, N.E., Zagane, A.:  {\it Some results on the geometry of a non-conformal deformation of a metric.} Commun. Korean Math. Soc. {\bf37} (3), 865--879, (2022). https://doi.org/10.4134/CKMS.c210207

\bibitem{D.P} Dragomir, S., Perrone, D.: Harmonic Vector Fields: Variational Principles and Diﬀerential Geometry. {\it Elsevier, Amsterdam} (2011).

\bibitem{E.L1} J. Eells and L. Lemaire, {\it A report on harmonic maps}, Bull. Lond. Math. Soc. {\bf 10} (1978), no. 1, 1--68. https://doi.org/10.1112/blms/10.1.1 

\bibitem{E.L2} J. Eells and L. Lemaire, {\it Another report on harmonic maps}, Bull. Lond. Math. Soc. {\bf 20} (1988), no. 5, 385--524. https://doi.org/10.1112/blms/20.5.385

\bibitem{E.S}  Eells, J. and  Sampson, J.H.: {\it Harmonic mappings of Riemannian manifolds}, Amer.J. Maths. {\bf86} (1) 109-160, (1964). https://doi.org/10.2307/2373037

\bibitem{Jia}  Jiang, G.: {\it $2$-Harmonic maps and their first and second variational formulas}, Translated into English by Hajime Urakawa. Note Mat. {\bf28} (1), 209-232, (2008). https://doi.org/10.1285/i15900932v28n1supplp209

\bibitem{K.A.O}  Kacimi, B.,  Alem, A., \"{O}zkan, M.: {\it On the Interpolating Sesqui-Harmonicity of Vector Fields}, Preprint arXiv:2211.00443v1 (2022). https://doi.org/10.48550/arXiv.2211.00443

\bibitem{K.O.D} Karaca, F., \"{O}zg\"{u}r, C., DE, U.C.:  {\it On Interpolating Sesqui-Harmonic Legendre Curves in Sasakian Space Forms}, Preprint arXiv:1903.02753v1 (2019). https://doi.org/10.48550/arXiv.1903.02753

\bibitem{Kle} Kleinert, H.: {\it The membrane properties of condensing strings}, Phys. Lett. B {\bf174} (3), 335--338, (1986). https://doi.org/10.1016/0370-2693(86)91111-1

\bibitem{Kon} Konderak, J.J.: {\it On Harmonic Vector Fields.} Publications Mathematiques. {\bf36}, 217-288, (1992).	

\bibitem{M.U} Markellos, M.,  Urakawa, H.: {\it the biharmonicity of sections of the tangent bundle}, Monatsh. Math. {\bf178} (3), 389--404, (2015.) https://doi.org/10.1007/s00605-014-0702-7

\bibitem{L.D.Z} Latti, F., Djaa, M.,  Zagane, A.: {\it Mus-Sasaki metric and harmonicity}, Math. Sci. Appl. E-Notes {\bf6} (1), 29--36, (2018). https://doi.org/10.36753/mathenot.421753

\bibitem{M.T} Musso, E., Tricerri, F.: {\it Riemannian metrics on tangent bundles}, Ann. Math. Pura Appl. {\bf150} (4), 1--20, (1988). https://doi.org/10.1007/BF01761461

\bibitem{S.G.I} Salimov, A.A.,  Gezer, A., Iscan, M.: {\it On para-K\"{a}hler--Norden structures on the tangent bundles}, Ann. Polon. Math. {\bf103} (3), 247--261, (2012). DOI: 10.4064/ap103-3-3

\bibitem{S.I.E} Salimov,  A.A., Iscan, M., Etayo, F.,  {\it Para-holomorphic $B$-manifold and its properties}, Topology Appl. {\bf154} (4), 925-933, (2007). https://doi.org/10.1016/j.topol.2006.10.003

\bibitem{Sas} Sasaki, S.: {\it On the differential geometry of tangent bundles of Riemannian manifolds II}, Tohoku Math. J. (2) {\bf14} (2), 146--155, (1962). https://doi.org/10.2748/tmj/1178244169

\bibitem{Zag7} Zagane, A.:   {\it On para-Kahler-Norden properties of the $\varphi$-Sasaki metric on tangent bundle}, Int. J. Maps Math. {\bf4} (2), 121--135, (2021).

\bibitem{Zag11}	 Zagane, A.: {\it A study of harmonic sections of tangent bundles with vertically rescaled Berger-type deformed Sasaki metric}, Proc. Inst. Math. Mech. Natl. Acad. Sci. Azerb. {\bf47} (2), 270--285, (2021). https://doi.org/10.30546/2409-4994.47.2.270

\bibitem{Zag16}  Zagane, A.: {\it A new class of metrics and harmonicity on the cotangent bundle}, Trans. Natl. Acad. Sci. Azerb. Ser. Phys.Tech. Math. Sci., {\bf42} (4), 141--152, (2022).	

\bibitem{Z.G} Zagane, A.,  Gezer, A.:  {\it Vertical Rescaled Cheeger-Gromoll metric and harmonicity on the cotangent bundle}, Adv. Stud. Euro-Tbil. Math. J. {\bf15} (3), 11--29, (2022).  https://doi.org/10.32513/asetmj/19322008221

\bibitem{Zag21}	Zagane, A.: {\it Geometry of $\varphi$-Sasaki metric on tangent bundle}, Int. J. Geom. {\bf12} (3), 110--126, (2023).

\bibitem{Zag26} A. Zagane, {\it Some results of harmonicity on tangent bundles with $\varphi$-Sasakian Metrics over para-K\"{a}hler--Norden manifold}, Proc. Inst. Math. Mech. Natl. Acad. Sci. Azerb. {\bf 49} (2023), no. 2, 295-311. https://doi.org/10.30546/2409-4994.2023.49.2.295

\bibitem{Z.G.C2} A. Zagane, A. Gezer and S. Chaoui , {\it Vertical Generalized Berger-type Metrics: A New Perspective on Harmonic Vector Fields}, Bul. Acad. \c{S}tiin\c{t}e Repub. Mold. Mat.,  107(2025), no. 1, 58-80. https://doi.org/10.56415/basm.y2025.i1.p58
\end{thebibliography}
\end{document}